\icmltitlerunning{A Stochastic Trust Region Method for Non-convex Minimization}
\newtheorem*{example*}{Example}
\begin{document}

\twocolumn[
\icmltitle{A Stochastic Trust Region Method for Non-convex Minimization}



\icmlsetsymbol{equal}{*}

\begin{icmlauthorlist}
	\icmlauthor{Zebang Shen}{equal,zju}
	\icmlauthor{Pan Zhou}{equal,nus}
	\icmlauthor{Cong Fang}{pku}
	\icmlauthor{Alejandro Ribeiro}{upenn}
\end{icmlauthorlist}

\icmlaffiliation{zju}{Zhejiang University}
\icmlaffiliation{nus}{National University of Singapore}
\icmlaffiliation{pku}{Peking University}
\icmlaffiliation{upenn}{University of Pennsylvania}


\icmlkeywords{Machine Learning, ICML}

\vskip 0.3in
]



\printAffiliationsAndNotice{\icmlEqualContribution} 

\begin{abstract}
We target the problem of finding a local minimum in non-convex finite-sum minimization. Towards this goal, we first prove that the trust region method with inexact gradient and Hessian estimation can achieve a convergence rate of order $\OM({1}/{k^{2/3}})$ as long as those differential estimations are sufficiently accurate.
	Combining such result with a novel Hessian estimator, we propose the sample-efficient stochastic trust region (STR) algorithm which finds an $(\epsilon, \sqrt{\epsilon})$-approximate local minimum within $\OM({\sqrt{n}}/{\epsilon^{1.5}})$ stochastic Hessian oracle queries.
	This improves state-of-the-art result by $\OM(n^{1/6})$.
	Experiments verify theoretical conclusions and the efficiency of STR.
\end{abstract}
\section{Introduction}
We consider the following finite-sum minimization problem
\begin{equation}
\min_{\xB\in\RBB^d} F(\xB) = \frac{1}{n}\sum_{i=1}^{n} f_i(\xB),
\label{eqn_stochastic_problem}
\end{equation}
where each (non-convex) component function $f_i:\RBB^d \rightarrow \RBB$ is assumed to have $L_1$-Lipschitz continuous gradient and $L_2$-Lipschitz continuous Hessian.
Since first-order stationary points could be saddle points and thus lead to inferior generalization performance~\cite{dauphin2014identifying}, in this work we are particularly interested in computing $(\epsilon, \sqrt{\epsilon})$-approximate second-order stationary points, $\epsilon$-SOSP:
\begin{equation}
\|\nabla F(\xB_\epsilon) \|\leq \epsilon \quad \text{and}\quad  \nabla^2 F(\xB_\epsilon) \succcurlyeq -\sqrt{L_2\epsilon} \IB. \label{eqn_epsilon_approximate_second_order_stationary_point}
\end{equation}
To find the local minimum in problem~\eqref{eqn_stochastic_problem}, the cubic regularization approach~\cite{nesterov2006cubic} and the trust region algorithm~\cite{conn2000trust,curtis2017trust} are two classical methods.
Specifically, cubic regularization forms a cubic surrogate function for the objective $F(\xB)$ by adding a third-order regularization term to the second-order Taylor expansion, and minimizes it iteratively.
Such a method is proved to achieve an $\OM({1}/{k^{2/3}})$ global convergence rate and thus needs $\OM({n}/{\epsilon^{1.5}})$ stochastic first- and second-order oracle queries, namely the evaluation number of stochastic gradient and Hessian, to achieve a point that satisfies (\ref{eqn_epsilon_approximate_second_order_stationary_point}).
On the other hand, trust region algorithms estimate the objective with its second-order Taylor expansion but minimize it only within a local region.
Recently, \citet{curtis2017trust} propose a trust region variant to achieve the same convergence rate as the cubic regularization approach. But both methods require computing full gradients and Hessians of the objective $F(\xB)$ and thus suffer from high computational cost in  large-scale problems.


To avoid coslty exact differential evaluations, many works explore the finite-sum structure of problem~\eqref{eqn_stochastic_problem} and develop stochastic cubic regularization approaches.
Both \citet{pmlr-v70-kohler17a} and \citet{xu2017newton} propose to directly subsample the gradient and Hessian in the cubic surrogate function, and achieve  $\OM({1}/{\epsilon^{3.5}})$ and $\OM({1}/{\epsilon^{2.5}})$ stochastic first- and second-order oracle complexities respectively.
By plugging a stochastic variance reduced estimator \cite{johnson2013accelerating} and the Hessian tracking technique \cite{pmlr-v84-gower18a} into the gradient and Hessian estimation, the approach in \citet{pmlr-v80-zhou18d}
improves both the stochastic first- and second-order oracle complexities to $\OM({n^{0.8}}/{\epsilon^{1.5}})$. Recently, \cite{zhang2018adaptive,zhou2018sample} develop more efficient stochastic cubic regularization variants, which further reduce the stochastic second-order oracle complexities to $\OM({n^{2/3}}/{\epsilon^{1.5}})$ at the cost of increasing the stochastic first-order oracle complexity to $\OM({n^{2/3}}/{\epsilon^{2.5}})$.

\noindent{\bf Contributions:} In this paper we propose and exploit a formulation in which we make explicit control of the step size in the trust region method. This idea is leveraged to develop two efficient stochastic trust region (STR) approaches.
We tailor our methods to achieve state-of-the-art oracle complexities under the following two measurements: (i) the stochastic second-order oracle complexity is prioritized; (ii) the stochastic first- and second-order oracle complexities are treated equally. Specifically, in Setting (i), our method STR$_1$ employs a newly proposed estimator to approximate the Hessian and adopts the estimator in \cite{fang2018spider} for gradient approximation.
Our novel Hessian estimator maintains a high accuracy second-order differential approximation with lower amortized oracle complexity.
In this way, STR$_1$ achieves $\OM(\min\{{1}/{\epsilon^2}, {\sqrt{n}}/{\epsilon^{1.5}}\})$ stochastic second-order oracle complexity. This is lower than existing results for solving problem~\eqref{eqn_stochastic_problem}.
In Setting (ii), our method STR$_2$ substitutes the gradient estimator in STR$_1$ with one that integrates stochastic gradient and Hessian together to maintain an accurate gradient approximation. As a result, STR$_2$ achieves convergence in $\OM({n^{3/4}}/{\epsilon^{1.5}})$ overall stochastic first- and second-order oracle queries.


\vspace{-0.2cm}
\subsection{Related Work}
Computing local minimum to a non-convex optimization problem is gaining considerable amount of attentions in recent years.
Both cubic regularization (CR) approaches~\cite{nesterov2006cubic} and trust region (TR) algorithms~\cite{conn2000trust,curtis2017trust} can escape saddle points and find a local minimum by iterating the variable along the direction related to the eigenvector of the Hessian with the most negative eigenvalue.
As the CR heavily depends the regularization parameter for the cubic term, \citet{cartis2011adaptive}~propose an adaptive cubic regularization (ARC) approach to boost the efficiency by adaptively tunes the regularization parameter according to the current objective decrease.
Noting the high cost of full gradient and Hessian computation in ARC, sub-sampled cubic regularization (SCR)~\cite{kohler2017sub} is developed for sampling partial data points to estimate the full gradient and Hessian.
Recently, by exploring the finite-sum structure of the target problem, many works incorporate variance-reduced technique \cite{johnson2013accelerating} into CR and propose stochastic variance-reduced methods. For example, \citet{zhou2018stochastic}~propose stochastic variance-reduced cubic (SVRC) in which they integrate the stochastic variance-reduced gradient estimator~\cite{johnson2013accelerating} and the Hessian tracking technique \cite{pmlr-v84-gower18a} with CR. Such a method is proved to be at least $\OM(n^{1/5})$ faster than CR and TR. Then \citet{zhou2018sample} suggest to use adaptive gradient batch size and constant Hessian batch size, and develop Lite-SVRC to further reduce the stochastic second-order oracle $\OM({n^{4/5}}/{\epsilon^{1.5}})$ of SVRC to $\OM({n^{2/3}}/{\epsilon^{1.5}})$ at the cost of higher gradient computation cost. Similarly, except turning the gradient batch size, \citet{zhang2018adaptive}~further adaptively sample a certain number of data points to estimate the Hessian and prove the proposed method to have the same stochastic second-order oracle complexity as Lite-SVRC.
\begin{table}[t]
	\caption{Stochastic first- and second-order oracle complexities, SFO and SSO for short respectively, of the proposed STR approaches and other state-of-the-arts methods.
		When SSO is prioritized, our STR$_1$ has strictly better complexity than both SCR and Lite-SVRC.
		When SFO an SSO are treated equally, STR$_2$ improves the existing result in SVRC.}\label{tableofcompareresult}
	\centering
	\begin{tabular}{c|c|c}
		\hline
		Algorithm       & SFO & SSO \\ \hline
		TR &   $\OM(\frac{n}{\epsilon^{1.5}})$  &  $\OM(\frac{n}{\epsilon^{1.5}})$   \\
		CR &   $\OM(\frac{n}{\epsilon^{1.5}})$  &  $\OM(\frac{n}{\epsilon^{1.5}})$   \\
		SCR &  $\OM(\frac{1}{\epsilon^{3.5}})$  &  $\OM(\frac{1}{\epsilon^{2.5}})$   \\
		SVRC &   $\OM(\frac{n^{4/5}}{\epsilon^{1.5}})$  &  $\OM(\frac{n^{4/5}}{\epsilon^{1.5}})$   \\
		Lite-SVRC &   $\OM(\frac{n^{2/3}}{\epsilon^{2.5}})$  &  $\OM(\frac{n^{2/3}}{\epsilon^{1.5}})$   \\ \hline
		STR$_1$     &  $\OM(\min\{\frac{n}{\epsilon^{1.5}}, \frac{\sqrt{n}}{\epsilon^2}\})$   &  $\OM(\min\{\frac{1}{\epsilon^2}, \frac{\sqrt{n}}{\epsilon^{1.5}}\})$   \\
		STR$_2$     &  $\OM(\frac{n^{3/4}}{\epsilon^{1.5}})$   &  $\OM(\frac{n^{3/4}}{\epsilon^{1.5}})$   \\ \hline
	\end{tabular}
\end{table}

\section{Preliminary}

\noindent{\bf Notation.}
We use $\|\vB\|$ to denote the Euclidean norm of vector $\vB$ and use $\|\AB\|$ to denote the spectral norm of matrix $\AB$.
Let $\SM$ be the set of component indices.
We define the batch average of component function by
\begin{equation*}
f(\xB; \SM) \defi \frac{1}{|\SM|}\sum_{i\in\SM} f_i(\xB).
\end{equation*}

Then we specify the assumptions that are necessary to the analysis of our methods.
\begin{assumption} \label{ass_boundedness_of_F}
	$F$ is bounded from below and its global optimal is achieved at $\xB^*$.
	We further denote
	\begin{equation*}
	\Delta = F(\xB^0) - F(\xB^*).
	\end{equation*}
\end{assumption}
\begin{assumption} \label{ass_first_order_smooth}
	Each component function $f_i:\RBB^d\rightarrow\RBB$ has $L_1$-Lipschitz continuous gradient: for any $\xB, \yB \in \RBB^d$
	\begin{equation}
		\|\nabla f_i(\xB) - \nabla f_i(\yB)\|\leq L_1\|\xB -\yB\|.
	\end{equation}
\end{assumption}
Clearly, the objective $F$ as the average of $n$ component functions also has $L_1$-Lipschitz continuous gradient.
\begin{assumption} \label{ass_second_order_smooth}
	Each component function $f_i:\RBB^d\rightarrow\RBB$ has $L_2$-Lipschitz continuous Hessian: for any $\xB, \yB \in \RBB^d$
	\begin{equation}
	\|\nabla^2 f_i(\xB) - \nabla^2 f_i(\yB)\|\leq L_2\|\xB -\yB\|.
	\end{equation}
\end{assumption}
Similarly, the objective $F$ has $L_2$-Lipschitz continuous Hessian, which implies the following: for any $\xB, \dB \in \RBB^d$,
$$F(\xB+\dB)\!\leq\! F(\xB)+ \nabla F(\xB)^\top\dB+  \frac{1}{2}\dB^\top\nabla^2F(\xB)\dB + \frac{L_2}{6}\|\dB\|^3.$$
\subsection{Trust Region Method}
The trust region method has a long history \cite{conn2000trust}.
In each step, it solves the Quadratic Constraint Quadratic Program (QCQP)
\begin{equation}
\hB^{k} := \argmin_{\hB\in\RBB^d, \|\hB\|\leq r} \langle \nabla F(\xB^k), \hB\rangle + \frac{1}{2}\langle\nabla^2 F(\xB^k)\hB, \hB\rangle,
\label{eqn_qcqp}
\end{equation}
where $r$ is the trust-region radius, and updates
\begin{equation}
\xB^{k+1} := \xB^{k} + \hB^{k}.
\label{eqn_trust_region_update}
\end{equation}
Since $\nabla^2 F(\xB^k)$ is indefinite, the trust-region subproblem (\ref{eqn_qcqp}) is non-convex, but its global optimizer can be characterized by the following lemma.
\begin{lemma}[Corollary 7.2.2 in \cite{conn2000trust}]
	Any global minimizer of problem (\ref{eqn_qcqp}) satisfies the equation
	\begin{equation}
		\left(\nabla^2 F(\xB^k) + \lambda \IB\right)\hB^{k} = -\nabla F(\xB^{k}),
	\end{equation}
	where the dual variable $\lambda \geq 0$ should satisfy $\nabla^2 F(\xB^k) + \lambda \IB \succcurlyeq 0$ and $\lambda(\|\hB^{k}\| - r)=0$.
	\label{lemma_qcqp_global_solution}
\end{lemma}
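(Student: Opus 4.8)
The plan is to give the classical characterization of the global solution of the trust-region subproblem, treating the interior and boundary cases separately. Write $\gB\defi\nabla F(\xB^k)$, $\HB\defi\nabla^2 F(\xB^k)$, and $m(\hB)\defi\langle\gB,\hB\rangle+\frac12\langle\HB\hB,\hB\rangle$, and let $\hB^k$ be a global minimizer of $m$ over $\{\hB:\|\hB\|\le r\}$ (we may assume $r>0$, else the statement is vacuous). If $\|\hB^k\|<r$, then $\hB^k$ is an unconstrained local minimizer of the smooth quadratic $m$, so the first- and second-order optimality conditions give $\nabla m(\hB^k)=\HB\hB^k+\gB=\mathbf{0}$ and $\nabla^2 m(\hB^k)=\HB\succcurlyeq \mathbf{0}$; taking $\lambda=0$ then verifies the equation, the semidefiniteness requirement $\HB+\lambda\IB\succcurlyeq 0$, and the complementarity $\lambda(\|\hB^k\|-r)=0$.

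It remains to handle $\|\hB^k\|=r$. Here $\hB^k$ also minimizes $m$ over the sphere $\{\hB:\|\hB\|^2=r^2\}$; since the constraint gradient $2\hB^k$ is nonzero, the Lagrange multiplier rule produces a scalar $\lambda\in\RBB$ with $\HB\hB^k+\gB=-\lambda\hB^k$, i.e. $(\HB+\lambda\IB)\hB^k=-\gB$, and complementarity holds because $\|\hB^k\|-r=0$. The substantive part is showing $\lambda\ge 0$ and $\HB+\lambda\IB\succcurlyeq 0$, and this is where \emph{global} (not merely local) optimality of $\hB^k$ is used. For $\lambda\ge 0$: along the feasible path $\hB_t=t\hB^k$, $t\in[0,1]$, one computes $\tfrac{d}{dt}m(\hB_t)\big|_{t=1}=\langle\HB\hB^k+\gB,\hB^k\rangle=-\lambda\|\hB^k\|^2=-\lambda r^2$; if $\lambda<0$ this derivative is strictly positive, so $m(\hB_t)<m(\hB^k)$ for $t$ slightly below $1$ while $\|\hB_t\|=tr<r$ is feasible, contradicting global optimality.

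For $\HB+\lambda\IB\succcurlyeq 0$: for any $\hB$ with $\|\hB\|=r=\|\hB^k\|$, substitute $\gB=-(\HB+\lambda\IB)\hB^k$ into $m(\hB)-m(\hB^k)$ and use the identity $\|\hB^k\|^2-\langle\hB^k,\hB\rangle=\tfrac12\|\hB-\hB^k\|^2$ (valid precisely because both vectors have norm $r$) to obtain $m(\hB)-m(\hB^k)=\tfrac12(\hB-\hB^k)^\top(\HB+\lambda\IB)(\hB-\hB^k)$. Global optimality forces the left-hand side to be nonnegative, so $(\hB-\hB^k)^\top(\HB+\lambda\IB)(\hB-\hB^k)\ge 0$ for every $\hB$ on the sphere. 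To upgrade this to $\HB+\lambda\IB\succcurlyeq 0$ on all of $\RBB^d$, observe that for any $\vB$ with $\langle\hB^k,\vB\rangle\ne 0$ the point $\hB=\hB^k-\frac{2\langle\hB^k,\vB\rangle}{\|\vB\|^2}\vB$ lies on the sphere and has $\hB-\hB^k$ a nonzero multiple of $\vB$, giving $\vB^\top(\HB+\lambda\IB)\vB\ge 0$; the remaining directions $\vB\perp\hB^k$ follow by continuity, applying the bound to $\vB+\varepsilon\hB^k$ and letting $\varepsilon\to 0$.

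The main obstacle is the boundary case, specifically the final direction-covering step for $\HB+\lambda\IB\succcurlyeq 0$: one must check that the displacements $\hB-\hB^k$ coming from the sphere, together with the limiting argument, genuinely reach every direction, and that the Lagrange multiplier rule is legitimately invoked (the constraint qualification, which $r>0$ supplies). Since this is exactly Corollary 7.2.2 of \cite{conn2000trust}, one may alternatively just cite that reference; the sketch above is the self-contained version.
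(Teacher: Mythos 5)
Your proof is correct, and it is essentially the same argument as the one behind the paper's statement: the paper offers no proof of this lemma at all, merely citing Corollary 7.2.2 of \cite{conn2000trust}, and your derivation (interior case via unconstrained first- and second-order conditions with $\lambda=0$; boundary case via the Lagrange multiplier, the radial path for $\lambda\ge 0$, and the sphere identity plus the direction-covering and limiting step for $\nabla^2 F(\xB^k)+\lambda\IB\succcurlyeq 0$) is exactly the classical proof that the cited corollary records. All steps check out, including the two points you flag as delicate: the constraint qualification is supplied by $r>0$, and the displacements $\hB-\hB^k$ together with the $\varepsilon\to 0$ limit do reach every direction in $\RBB^d$.
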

In particular, the standard QCQP solver returns both the minimizer $\hB^{k}$ as well as the corresponding dual variable $\lambda$ of subproblem \eqref{eqn_qcqp}.
While it is known that the trust-region update (\ref{eqn_qcqp}) and (\ref{eqn_trust_region_update}) converges at the rate $\OM({1}/{\sqrt{k}})$, recently \citep{curtis2017trust} proposes a trust-region variant which converges at the optimal rate $\OM({1}/{k^{2/3}})$ \cite{carmon2017lower}.
In this paper, we show that the vanilla trust-region update (\ref{eqn_qcqp}) and (\ref{eqn_trust_region_update}) already achieves the optimal convergence rate as the byproduct of our novel argument.

\begin{algorithm}[t]
	\floatname{algorithm}{MetaAlgorithm}
	\caption{Inexact Trust Region Method}
	\label{alg_stochastic_trust_region}
	\begin{algorithmic}[1]
		\REQUIRE Initialization $\xB^{0}$, step size $r$, number of iterations $K$, construction of differential estimators $\gB^{k}$ and $\HB^{k}$
		\FOR{$k = 1$ {\bfseries to} $K$}
		\STATE Compute $\hB^{k}$ and $\lambda^{k}$ by solving (\ref{eqn_qcqp_inexact});
		\STATE $\xB^{k+1} := \xB^{k} + \hB^{k}$;
		\IF{$\lambda^{k} \leq {2\sqrt{\epsilon/L_2}}$}
		\STATE Output $\xB_\epsilon = \xB^{k+1}$; \label{eqn_termination}
		\ENDIF
		\ENDFOR
	\end{algorithmic}
\end{algorithm}
\section{Methodology}
In this section, we first introduce a general inexact trust region method which is summarized in MetaAlgorithm~\ref{alg_stochastic_trust_region}.
It accepts inexact gradient estimation $\gB^{k}$ and Hessian estimation $\HB^{k}$ as input to the QCQP subproblem
\begin{equation}
	\hB^{k} := \argmin_{\hB\in\RBB^d, \|\hB\|\leq r} \langle \gB^{k}, \hB\rangle + \frac{1}{2}\langle\HB^{k}\hB, \hB\rangle.
	\label{eqn_qcqp_inexact}
\end{equation}
Similar to (\ref{eqn_qcqp}), the global solutions to (\ref{eqn_qcqp_inexact}) are characterized by Lemma \ref{lemma_qcqp_global_solution} and we further denote the dual variable corresponding to the minimizer $\hB^{k}$ by $\lambda^{k}$.
In practice, (\ref{eqn_qcqp_inexact}) can be efficiently solved by Lanczos method~\cite{gould1999solving}.

We prove that such inexact trust-region method achieves the optimal $\OM({1}/{k^{2/3}})$ convergence rate when the estimation $\gB^{k}$ and $\HB^{k}$ at each iteration are sufficient close to their full (exact) counterparts $\nabla F(\xB^{k})$ and $\nabla^2 F(\xB^{k})$ respectively:
\begin{equation}\label{conditiongradient}
\|\gB^{k} - \nabla F(\xB^{k})\| \leq \frac{\epsilon}{6}, \
\|\HB^{k} - \nabla^2 F(\xB^{k})\|\leq \frac{\sqrt{\epsilon L_2}}{3}.
\end{equation}

Such result allows us to derive stochastic trust-region variants with novel differential estimators that are tailored to ensure the optimal convergence rate.
We state our formal results in Theorem~\ref{thm_TR_meta}.


\begin{theorem}
	Consider problem (\ref{eqn_stochastic_problem}) under Assumption \ref{ass_boundedness_of_F}-\ref{ass_second_order_smooth}.
	If the differential estimators $\gB^{k}$ and $\HB^{k}$ satisfy Eqn.~\eqref{conditiongradient} for all $k$, MetaAlgorithm \ref{alg_stochastic_trust_region} finds an $\OM(\epsilon, \sqrt{\epsilon})$-SOSP in less than $K = \OM(\sqrt{L_2}\Delta/\epsilon^{1.5})$ iterations by setting the trust-region radius as $r = \sqrt{\epsilon/L_2}$.
	\label{thm_TR_meta}
\end{theorem}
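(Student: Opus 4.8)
The plan is to split the run of MetaAlgorithm~\ref{alg_stochastic_trust_region} according to its termination test: the iterations that fail it (where $\lambda^{k}$ is large) are shown to decrease $F$ by a fixed amount, and the single iteration that passes it is shown to produce an approximate SOSP; counting the former against the total descent budget $\Delta$ then gives $K=\OM(\sqrt{L_2}\Delta/\epsilon^{1.5})$. Write $m_k(\hB)=\langle\gB^{k},\hB\rangle+\tfrac{1}{2}\langle\HB^{k}\hB,\hB\rangle$ for the subproblem objective in~\eqref{eqn_qcqp_inexact}, and recall from Lemma~\ref{lemma_qcqp_global_solution} applied to~\eqref{eqn_qcqp_inexact} that $(\HB^{k}+\lambda^{k}\IB)\hB^{k}=-\gB^{k}$, $\HB^{k}+\lambda^{k}\IB\succcurlyeq 0$, and $\lambda^{k}(\|\hB^{k}\|-r)=0$.

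First I would establish the per-iteration decrease. On a non-terminating iteration $\lambda^{k}>0$, so $\|\hB^{k}\|=r$ by complementary slackness. Starting from the cubic upper bound implied by Assumption~\ref{ass_second_order_smooth},
\begin{equation*}
F(\xB^{k+1})\le F(\xB^{k})+\nabla F(\xB^{k})^{\top}\hB^{k}+\tfrac{1}{2}(\hB^{k})^{\top}\nabla^{2}F(\xB^{k})\hB^{k}+\tfrac{L_2}{6}\|\hB^{k}\|^{3},
\end{equation*}
replace $\nabla F(\xB^{k})$ and $\nabla^{2}F(\xB^{k})$ by $\gB^{k}$ and $\HB^{k}$ at the cost of the errors in~\eqref{conditiongradient}, turning the right side into $F(\xB^{k})+m_k(\hB^{k})+\tfrac{\epsilon}{6}r+\tfrac{\sqrt{\epsilon L_2}}{6}r^{2}+\tfrac{L_2}{6}r^{3}$. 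Then, substituting $\gB^{k}=-(\HB^{k}+\lambda^{k}\IB)\hB^{k}$ into $m_k(\hB^{k})$ and using $\HB^{k}\succcurlyeq-\lambda^{k}\IB$ gives in two lines $m_k(\hB^{k})=-\lambda^{k}\|\hB^{k}\|^{2}-\tfrac{1}{2}(\hB^{k})^{\top}\HB^{k}\hB^{k}\le-\tfrac{1}{2}\lambda^{k}r^{2}$. The prescribed radius $r=\sqrt{\epsilon/L_2}$ makes each of the three error terms equal to $\tfrac{1}{6}\epsilon^{3/2}/\sqrt{L_2}$, and the threshold on $\lambda^{k}$ in the termination test is chosen precisely so that, when the test fails, $-m_k(\hB^{k})\ge\tfrac{1}{2}\lambda^{k}r^{2}$ exceeds their sum; hence $F(\xB^{k+1})\le F(\xB^{k})-c\,\epsilon^{3/2}/\sqrt{L_2}$ for an absolute constant $c>0$.

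Next I would certify the point produced at the terminating iteration, where $\lambda^{k}$ is at most the threshold and $\|\hB^{k}\|\le r$. For the Hessian, chain $\HB^{k}\succcurlyeq-\lambda^{k}\IB$ with $\|\HB^{k}-\nabla^{2}F(\xB^{k})\|\le\sqrt{\epsilon L_2}/3$ from~\eqref{conditiongradient} and the Lipschitz bound $\|\nabla^{2}F(\xB^{k+1})-\nabla^{2}F(\xB^{k})\|\le L_2\|\hB^{k}\|\le L_2 r=\sqrt{\epsilon L_2}$ to obtain $\nabla^{2}F(\xB^{k+1})\succcurlyeq-\OM(\sqrt{\epsilon L_2})\,\IB$. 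For the gradient, Taylor-expand $\nabla F$ about $\xB^{k}$ with integral remainder bounded by $\tfrac{L_2}{2}r^{2}=\epsilon/2$, substitute the estimates of~\eqref{conditiongradient}, and use the identity $\gB^{k}+\HB^{k}\hB^{k}=-\lambda^{k}\hB^{k}$ whose norm $\lambda^{k}\|\hB^{k}\|$ is $\OM(\epsilon)$ since $\lambda^{k}$ is below the termination threshold and $\|\hB^{k}\|\le r$; the remaining estimation errors are $\OM(\epsilon)$ for the same reason $r=\sqrt{\epsilon/L_2}$ was chosen, so $\|\nabla F(\xB^{k+1})\|=\OM(\epsilon)$. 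This makes $\xB^{k+1}$ an $(\OM(\epsilon),\OM(\sqrt{\epsilon}))$-SOSP in the sense of~\eqref{eqn_epsilon_approximate_second_order_stationary_point}.

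Finally, since $F$ is bounded below with $F(\xB^{0})-F(\xB^{*})=\Delta$ (Assumption~\ref{ass_boundedness_of_F}), the decrease from the first step allows at most $\Delta/(c\,\epsilon^{3/2}/\sqrt{L_2})=\OM(\sqrt{L_2}\Delta/\epsilon^{1.5})$ non-terminating iterations, so the test must fire within $K=\OM(\sqrt{L_2}\Delta/\epsilon^{1.5})$ iterations and output the SOSP just certified. I expect the per-iteration decrease to be the main obstacle: extracting the clean inequality $m_k(\hB^{k})\le-\tfrac{1}{2}\lambda^{k}\|\hB^{k}\|^{2}$ from the (possibly degenerate ``hard case'' of the) trust-region optimality conditions, and then checking that $r=\sqrt{\epsilon/L_2}$ together with the termination threshold is calibrated so the guaranteed model reduction out-weighs the combined first-order, second-order, and cubic estimation errors by a constant factor, treating $L_1,L_2$ as absolute constants. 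Everything downstream is bookkeeping.
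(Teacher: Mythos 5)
Your proposal is correct and follows essentially the same route as the paper's proof: the cubic upper bound from Assumption \ref{ass_second_order_smooth}, separation of the estimation errors via \eqref{conditiongradient}, the KKT conditions of the trust-region subproblem to extract the model decrease $-\tfrac{1}{2}\lambda^{k}r^{2}$ (the paper derives the same bound via $\langle\gB^{k},\hB^{k}\rangle\leq 0$), the threshold on $\lambda^{k}$ to certify the terminating iterate as an approximate SOSP, and the descent-budget count against $\Delta$. Your bookkeeping is if anything slightly more careful than the paper's, since you retain the $\tfrac{L_2}{6}\|\hB^{k}\|^{3}$ term (which the paper drops between its inequalities \eqref{eqn_proof_4_stochastic} and \eqref{eqn_proof_3_stochastic}), so the calibration check you flag at the end is indeed the only point needing a small constant adjustment in either argument.
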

\begin{proof}
	For simplicity of notation, we denote $$\nabla_k \defi \gB^{k} - \nabla F(\xB^{k}) \text{ and } \nabla_k^2 \defi \HB^{k} - \nabla^2 F(\xB^{k}).$$
	From Assumption \ref{ass_second_order_smooth} we have
	\begin{align*}
	&F(\xB^{k+1}) \\
	\leq& F(\xB^{k}) \!+\! \langle\nabla F(\xB^{k}), \hB^{k}\rangle  + \frac{1}{2}\langle\nabla^2 F(\xB^{k})\hB^{k}, \hB^{k}\rangle + \frac{L_2}{6}\|\hB^{k}\|^3\\
	=& F(\xB^{k})  \!+\! \langle \nabla_k \!+\! \gB^{k}, \hB^{k}\rangle + \frac{1}{2}\langle[\nabla_k^2+\HB^{k}]\hB^{k}, \hB^{k}\rangle \!+\! \frac{L_2}{6}\|\hB^{k}\|^3.
	\end{align*}
	Use the Cauchy–Schwarz inequality to obtain
	\begin{align}
	F(\xB^{k+1}) \leq& F(\xB^{k}) \!+\! \langle\gB^{k}, \hB^{k}\rangle \!+\! \frac{1}{2}\langle\HB^{k}\hB^{k}, \hB^{k}\rangle\!+\! \frac{L_2}{6}\|\hB^{k}\|^3 \notag \\
	& \!+\! \|\nabla_k\|\|\hB^{k}\| \!+\! \frac{1}{2}\|\nabla_k^2\|\|\hB^{k}\|^2.
	\label{eqn_proof_4_stochastic}
	\end{align}
	The requirement (\ref{conditiongradient}) together with the trust region $\|\hB\|\leq\sqrt{\epsilon/L_2}$ allow us to bound
	\begin{align}
	\|\nabla_k\|\|\hB^{k}\| \!+\! \frac{1}{2}\|\nabla_k^2\|\|\hB^{k}\|^2 \leq \frac{1}{3}\cdot\frac{\epsilon^{1.5}}{\sqrt{L_2}}.
	\label{eqn_proof_0_stochastic}
	\end{align}	
	The optimality of (\ref{eqn_qcqp}) indicates that there exists dual variable $\lambda^{k} \geq 0$ so that (Corollary 7.2.2 in \cite{conn2000trust})
	\begin{align}
	\text{First Order}&:\gB^{k} + \HB^{k}\hB^{k} + \frac{\lambda^{k} L_2}{2} \hB^{k} = 0, \label{eqn_optimality_first_order_stochastic} \\
	\text{Second Order}&: \HB^{k} + \frac{\lambda^{k} L_2}{2} \cdot\IB \succcurlyeq 0, \label{eqn_optimality_second_order_stochastic} \\
	\text{Complementary}&: \lambda^{k} \cdot (\|\hB^{k}\| - r) = 0.
	\label{eqn_optimality_complementary_stochastic}
	\end{align}
	Multiplying (\ref{eqn_optimality_first_order_stochastic}) by $\hB^{k}$, we have
	\begin{equation}
	\langle\gB^{k} + \HB^{k}\hB^{k} + \frac{\lambda^{k} L_2}{2} \hB^{k}, \hB^{k}\rangle = 0.
	\label{eqn_proof_1_stochastic}
	\end{equation}
	Additionally, using (\ref{eqn_optimality_second_order_stochastic}) we have $$\langle (\HB^{k} + \frac{\lambda^{k} L_2}{2})\hB^{k}, \hB^{k}\rangle\geq 0,$$ which together with (\ref{eqn_proof_1_stochastic}) gives
	\begin{equation}
	\langle\gB^{k}, \hB^{k}\rangle \leq 0.
	\label{eqn_proof_2_stochastic}
	\end{equation}
Moreover, the complementary property \eqref{eqn_optimality_complementary_stochastic} indicates $\|\hB^{k}\| $ $=\sqrt{{\epsilon}/{L_2}}$ as we have $\lambda^{k}\geq 2\sqrt{\epsilon/L_2} >0$ in MetaAlgorithm.
	Plug (\ref{eqn_proof_0_stochastic}), (\ref{eqn_proof_1_stochastic}), and (\ref{eqn_proof_2_stochastic}) into (\ref{eqn_proof_4_stochastic}) and use $\|\hB\|=\sqrt{{\epsilon}/{L_2}}$:
	\begin{equation}
	F(\xB^{k+1}) \leq F(\xB^{k}) - \frac{L_2\lambda^{k}}{4}\cdot\frac{\epsilon}{L_2} + \frac{1}{3}\cdot\frac{\epsilon^{1.5}}{\sqrt{L_2}}.
	\label{eqn_proof_3_stochastic}
	\end{equation}
	Therefore, if we have $\lambda^{k}>{2\epsilon^{0.5}}/{\sqrt{L_2}}$, then
	\begin{equation}
	F(\xB^{k+1})\leq F(\xB^{k}) - {\frac{1}{6\sqrt{L_2}}}\cdot\epsilon^{1.5}.
		\label{eqn_proof_4}
	\end{equation}
	Using Assumption \ref{ass_boundedness_of_F}, we find $\lambda^{k} \leq {2\epsilon^{0.5}}/{\sqrt{L_2}}$ in no more than $6{\sqrt{L_2}\cdot(F(\xB^{0}) - F(\xB^*))}/{\epsilon^{1.5}}$ iterations.
	
	We now show that once $\lambda^{k} \leq {2\epsilon^{0.5}}/{\sqrt{L_2}}$, then $\xB^{k+1}$ is already an $\OM(\epsilon)$-SOSP:
	From (\ref{eqn_optimality_first_order_stochastic}), we have
	\begin{equation}
	\|\gB^{k} + \HB^{k}\hB^{k}\| = \frac{L_2 \lambda^{k}}{2}\cdot\|\hB^{k}\| \leq 2 \epsilon.
	\end{equation}
	The assumptions $\|\nabla_k\|\leq \epsilon/6$ and $\|\nabla_k^2\|\leq \sqrt{\epsilon L_2}/3$ together with the trust region $\|\hB\|\leq\sqrt{\epsilon/L_2}$ imply
	\begin{equation}
	\begin{aligned}
	&\|\nabla F(\xB^{k}) + \nabla^2 F(\xB^{k})\hB^{k}\| \\
	\leq &\|\gB^{k} + \HB^{k}\hB^{k}\| +\|\nabla_k\| + \|\nabla_k^2\cdot\hB^{k}\| \leq 2.5\epsilon.
	\end{aligned}
	\end{equation}
	On the other hand use Assumption \ref{ass_second_order_smooth} to bound
	\begin{equation*}
	\|\nabla F(\xB^{k+1}) - \nabla F(\xB^{k}) - \nabla^2 F(\xB^{k})\hB^{k}\|\leq \frac{L_2}{2}\|\hB^{k}\|^2 \leq \frac{\epsilon}{2}.
	\end{equation*}
	Combining these two results gives $\|\nabla F(\xB^{k+1})\| \leq 3\epsilon$.\\
	Besides use Assumption \ref{ass_second_order_smooth}, $\|\nabla^2_k\|\leq \sqrt{\epsilon L_2}/3$, and (\ref{eqn_optimality_second_order_stochastic}) we derive the Hessian lower bound
	\begin{equation*}
	\begin{aligned}
	\nabla^2 F(\xB^{k+1})& \succcurlyeq \nabla^2 F(\xB^{k}) - L_2\cdot\|\hB^{k}\|\IB \\
	&\succcurlyeq \HB^{k} \!-\! \sqrt{\epsilon L_2}/3\IB \!-\! L_2\|\hB^{k}\|\IB \succcurlyeq \!-\! \frac{10}{3}\sqrt{L_2\epsilon}\IB.
	\end{aligned}
	\end{equation*}
	Hence $\xB^{k+1}$ is a $12\epsilon$-stationary point.
	
	Therefore, we have $\|\hB^k\| = r$ according to the complementary condition (\ref{eqn_optimality_complementary_stochastic}) for all but the last iteration.
\end{proof}
\begin{remark}
	We emphasize that MetaAlgorithm \ref{alg_stochastic_trust_region} degenerates to the exact trust region method by taking $\gB^{k} = \nabla F(\xB^k)$ and $\HB^{k} = \nabla^2 F(\xB^k)$.
	Such result is of its own interest because this is the first proof to show that the vanilla trust region method has the optimal $\OM({1}/{k^{2/3}})$ convergence rate.
	Similar rate is achieved by \cite{curtis2017trust} but with a complicated trust region variant.
\end{remark}
\begin{remark}
	We note that MetaAlgorithm \ref{alg_stochastic_trust_region} uses the dual variable $\lambda^{k}$ as stopping criterion, which enables the last-term convergence analysis in Theorem \ref{thm_TR_meta}.
	In our appendix, we present a variant of MetaAlgorithm \ref{alg_stochastic_trust_region} without accessing to the exact dual variable.
	Further, we show such variant enjoys a similar convergence guarantee in expectation.
\end{remark}

Theorem~\ref{thm_TR_meta} shows the explicit step size control of the trust-region method:
Since the dual variable satisfies $\lambda^k > {2\epsilon^{0.5}}/{\sqrt{L_2}} >0$ for all but the last iteration, we always find the solution to the trust-region subproblem (\ref{eqn_qcqp_inexact}) in the boundary, i.e. $\|\hB^k\| = r$, according to the complementary condition (\ref{eqn_optimality_complementary_stochastic}).
Such exact step-size control property is missing in the cubic-regularzation method where the step-size is implicitly decided by the cubic regularization parameter.

More importantly, we emphasize that such explicit step size control is crucial to the sample efficiency of our variance reduced differential estimators.
The essence of variance reduction is to exploit the correlations between the differentials in consecutive iterations.
Intuitively, when two neighboring iterates are close, so are their differentials due to the Lipschitz continuity, and hence a smaller number of samples suffice to maintain the accuracy of the estimators.
On the other hand, smaller step size reduces the per-iteration objective decrease which harms the convergence rate of the algorithm (see proof of Theorem~\ref{thm_TR_meta}).
Therefore, the explicit step-size control in trust-region method allows us to well trade-off the per-iteration sample complexity and convergence rate, from which we can derive stochastic trust region approaches with state-of-the-art sample efficiency.

\section{Stochastic Trust Region Method: Type I}
\begin{algorithm}[t]
	\caption{STR$_1$}
	\label{alg_str_1}
	\begin{algorithmic}[1]
		\REQUIRE Initializer $\xB^{0}$, step size $r$, number of iterations $K$
		\FOR{$k = 1$ {\bfseries to} $K$}
		\STATE Construct gradient estimator $\gB^{k}$ by Estimator \ref{alg_SPIDER_estimator};
		\STATE Construct Hessian estimator $\HB^{k}$ by Estimator \ref{alg_HB_estimator};
		\STATE Compute $\hB^{k}$ by solving (\ref{eqn_qcqp_inexact});
		\STATE $\xB^{k+1} := \xB^{k} + \hB^{k}$;
		\IF{$\lambda^{k} \leq {2\sqrt{\epsilon/L_2}}$}
		\STATE Output $\xB_\epsilon = \xB^{k+1}$;
		\ENDIF
		\ENDFOR
	\end{algorithmic}
\end{algorithm}
Having the inexact trust-region method as prototype, we now derive our first sample-efficient stochastic trust region methods, namely STR$_1$, which emphasises more cheaper stochastic second-order oracle complexity.
As Theorem \ref{thm_TR_meta} already guarantees the optimal convergence rate of MetaAlgorithm \ref{alg_stochastic_trust_region} when   the gradient estimator $\gB^{k}$ and the Hessian estimator $\HB^{k}$ meet requirements (\ref{conditiongradient}), here we focus on constructing such novel differential estimators. Specifically,
we first present our Hessian estimator in Estimator \ref{alg_HB_estimator} and our first gradient estimator in Estimator \ref{alg_SPIDER_estimator}, both of which exploit the trust region radius $r=\sqrt{\epsilon L_2}$ to reduce their variances.
Further, by plugging Estimator \ref{alg_HB_estimator} and Estimator \ref{alg_SPIDER_estimator} in MetaAlgorithm \ref{alg_stochastic_trust_region}, we present STR$_1$ in Algorithm \ref{alg_str_1} with state-of-the-art stochastic Hessian complexity.

\subsection{Hessian Estimator}
\begin{algorithm}[t]
	\floatname{algorithm}{Estimator}
	\caption{Hessian Estimator}
	\label{alg_HB_estimator}
	\begin{algorithmic}[1]
		\REQUIRE Epoch length $p_2$, sample size $s_2$, $s_2'$ (optional)
		\IF{mod($k, p_2$)$=0$}
		\STATE Option I: \ \qquad $\diamond$ high accuracy case (small $\epsilon$)  \\
		\quad $\HB^{k} := \nabla^2 F(\xB^{k})$;\label{step2}
		\STATE Option II: \qquad $\diamond$ low accuracy case (moderate $\epsilon$)\\
		\quad Draw $s_2'$ samples indexed by $\HM'$;\\
		\quad $\HB^{k} := \nabla^2 f(\xB^{k}; \HM')$;
		\ELSE
		\STATE Draw $s_2$ samples indexed by $\HM$;
		\STATE $\HB^{k} := \nabla^2 f(\xB^{k}; \HM)-\nabla^2 f(\xB^{k-1}; \HM)+\HB^{k-1}$;
		\ENDIF	
	\end{algorithmic}
\end{algorithm}

Our epoch-wise Hessian estimator $\HB^{k}$ is given in Estimator \ref{alg_HB_estimator}, where $p_2$ controls the epoch length and $s_2$  (and optionally $s_2'$) controls the minibatch size.
At the beginning of each epoch Estimator \ref{alg_HB_estimator} has two options, designed for different target accuracy:
Option I is preferable for the high accuracy case ($\epsilon < \OM(1/n)$) where we compute the full Hessian to avoid approximation error, and Option II is designed for the moderate accuracy case ($\epsilon > \OM(1/n)$) where we only need to an approximate Hessian  estimator.
Then, $p_2$ iterations follow with $\HB^{k}$ defined in a recurrent manner.
 These recurrent estimators exist for the first-order case \cite{nguyen2017sarah,fang2018spider}, but their bound only holds under the vector $\ell_2$ norm. Here we generalize them into Hessian estimation with matrix spectrum norm bound.

The following lemma analyzes the amortized stochastic second-order oracle (Hessian) complexity for Algorithm \ref{alg_HB_estimator} to meet the requirement in Theorem \ref{thm_TR_meta}.
As we need an approximation error bound under the spectrum norm, we will appeal to the matrix Azuma's inequality \cite{tropp2012user}.
\begin{lemma}\label{lemma_Hessian_estimator}
	Assume Algorithm \ref{alg_str_1} takes the trust region radius $r = \sqrt{\epsilon/L_2}$ as in Theorem \ref{thm_TR_meta}.
	For any $k\geq 0$, Estimator \ref{alg_HB_estimator} produces estimators $\HB^{k}$ for the second order differentials $\nabla^2 F(\xB^{k})$ such that $\|\HB^{k} - \nabla^2 F(\xB^{k})\|\leq \sqrt{\epsilon L_2}/3$ with probability at least $1-\delta/K_0$ if we set\\
	1. $p_2 = \sqrt{n}$ and $s_2 = 32\sqrt{n}\log({dK_0}/{\delta})$ in option I, or\\
	2. $p_2 = {L_1}/({2\sqrt{\epsilon L_2}})$, $s_2'={16L_1^2}/{(\epsilon L_2)}\log({dK_0}/{\delta})$, and $s_2 = {32L_1}/{(\sqrt{\epsilon L_2})}\log({dK_0}/{\delta})$ in option II .\\
	Consequently the amortized per-iteration stochastic second-order oracle complexity to construct $\HB^{k}$ is no more than $$2s_2 =\min\{ 64\sqrt{n}\log\frac{d}{\delta K_0}, \frac{64L_1}{\sqrt{\epsilon L_2}}\log\frac{dK_0}{\delta}\}.$$
\end{lemma}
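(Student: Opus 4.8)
The plan is to bound the estimation error $\HB^{k}-\nabla^2 F(\xB^{k})$ for a fixed iteration $k$ by unrolling the recurrence over the current epoch and then invoking matrix concentration; the stated $1-\delta/K_0$ is a per-iteration guarantee, to be combined later with a union bound over the $\le K_0$ iterations. Let $t_0=p_2\lfloor k/p_2\rfloor$ denote the head of the epoch containing $k$. Telescoping the recurrence $\HB^{j}=\nabla^2 f(\xB^{j};\HM_j)-\nabla^2 f(\xB^{j-1};\HM_j)+\HB^{j-1}$ and subtracting the corresponding telescoping identity for $\nabla^2 F$ gives
\[
\HB^{k}-\nabla^2 F(\xB^{k})=\EB_0+\sum_{j=t_0+1}^{k}\YB_j,\qquad \EB_0\defi\HB^{t_0}-\nabla^2 F(\xB^{t_0}),
\]
with $\YB_j\defi[\nabla^2 f(\xB^{j};\HM_j)-\nabla^2 f(\xB^{j-1};\HM_j)]-[\nabla^2 F(\xB^{j})-\nabla^2 F(\xB^{j-1})]$ and $\HM_j$ the fresh size-$s_2$ minibatch of step $j$. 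Since $\HM_j$ is independent of the $\sigma$-field $\FM_{j-1}$ that determines $\xB^{j-1},\xB^{j},\HB^{j-1}$, each $\YB_j$ is self-adjoint with $\EBB[\YB_j\mid\FM_{j-1}]=\zeroB$; moreover $\EB_0=\zeroB$ under Option~I, while under Option~II $\EB_0=\nabla^2 f(\xB^{t_0};\HM')-\nabla^2 F(\xB^{t_0})$ is an average of $s_2'$ i.i.d.\ centered self-adjoint matrices.

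Next I would use the trust-region radius to shrink every increment, which is exactly where sample efficiency comes from. Since $\|\xB^{j}-\xB^{j-1}\|=\|\hB^{j-1}\|\le r=\sqrt{\epsilon/L_2}$, Assumption~\ref{ass_second_order_smooth} gives $\|\nabla^2 f_i(\xB^{j})-\nabla^2 f_i(\xB^{j-1})\|\le L_2r=\sqrt{\epsilon L_2}$ for every $i$, and the same for $\nabla^2 F$; writing $\YB_j=\tfrac1{s_2}\sum_{i\in\HM_j}\WB_{j,i}$ with $\WB_{j,i}$ the corresponding centered per-sample matrix, this yields $\|\WB_{j,i}\|\le2\sqrt{\epsilon L_2}$. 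For the head term, Assumption~\ref{ass_first_order_smooth} bounds $\|\nabla^2 f_i\|\le L_1$, so each centered summand of $\EB_0$ has norm at most $2L_1$.

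I would then expose the $s_2$ i.i.d.\ samples of each step individually, so that the rescaled terms $\{\tfrac1{s_2}\WB_{j,i}\}_{t_0<j\le k,\ i\in\HM_j}$ — and, under Option~II, the $s_2'$ rescaled head summands — form a single self-adjoint matrix martingale difference sequence whose entries are bounded by $2\sqrt{\epsilon L_2}/s_2$ (resp.\ $2L_1/s_2'$). Applying the matrix Azuma inequality~\cite{tropp2012user} separately to the recurrence part (predictable variance proxy $\OM(p_2\epsilon L_2/s_2)$, length $\le p_2 s_2$) and to the head part (variance proxy $\OM(L_1^2/s_2')$), each deviation is below $\tfrac16\sqrt{\epsilon L_2}$ except with probability $2d\exp(-\Omega(s_2/p_2))$ and $2d\exp(-\Omega(\epsilon L_2 s_2'/L_1^2))$ respectively; a triangle inequality and union bound then give $\|\HB^{k}-\nabla^2 F(\xB^{k})\|\le\tfrac13\sqrt{\epsilon L_2}$ with probability $\ge1-\delta/K_0$ once $s_2=\Theta(p_2\log(dK_0/\delta))$ and $s_2'=\Theta((L_1^2/\epsilon L_2)\log(dK_0/\delta))$. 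Substituting $p_2=\sqrt n$ (Option~I) or $p_2=L_1/(2\sqrt{\epsilon L_2})$ (Option~II) gives the stated parameters, and the precise constants follow from tracking those in the concentration bound. The amortized oracle cost is then bookkeeping: an epoch costs the head query ($n$ component Hessians in Option~I, $s_2'$ in Option~II) plus $2s_2$ per recurrence step, so dividing by $p_2$ and using $n/p_2=\sqrt n\le s_2$ in Option~I and $s_2'/p_2=s_2$ in Option~II bounds the amortized per-iteration second-order oracle complexity by $\OM(s_2)=\min\{\OM(\sqrt n\log\tfrac{dK_0}{\delta}),\OM(\tfrac{L_1}{\sqrt{\epsilon L_2}}\log\tfrac{dK_0}{\delta})\}$.

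The main obstacle is getting a genuinely useful \emph{spectral-norm} bound: the classical SARAH/SPIDER recursion is analyzed for vectors in $\ell_2$, and applying a matrix Bernstein/Freedman bound directly to the increments $\YB_j$ fails, because the almost-sure bound $\|\YB_j\|\le2\sqrt{\epsilon L_2}$ is of the same order as the target accuracy $\tfrac13\sqrt{\epsilon L_2}$, so the linear-in-$t$ term in Freedman's inequality swamps the variance term and no concentration is obtained. The cure — exposing the $s_2$ samples of each step individually so the effective per-term bound drops to $\OM(\sqrt{\epsilon L_2}/s_2)$ — is what makes the epoch-length/minibatch-size trade-off go through, and setting up this martingale correctly (with the right filtration, the within-step independence, and the union bound over the epoch) is the crux of the proof.
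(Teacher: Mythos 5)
Your proposal is correct and follows essentially the same route as the paper's proof: unroll the epoch recurrence, expose the $s_2$ (resp.\ $s_2'$) samples of each step individually as a self-adjoint matrix martingale difference sequence with per-term bounds $2\sqrt{\epsilon L_2}/s_2$ (from the Hessian Lipschitz constant and the trust-region radius) and $2L_1/s_2'$, and apply the matrix Azuma inequality of \cite{tropp2012user}. The only cosmetic difference is that you invoke Azuma separately on the head and recurrence parts and then union-bound, whereas the paper applies it once to the combined sequence; your remark about why a Freedman-type bound on the aggregated increments $\YB_j$ would fail is exactly the reason the paper's double-sum decomposition is used.
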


\begin{proof}
	Without loss of generality, we analyze the case $0\leq k< q_2$ for ease of notation.
	We first focus on Option II.
	The proof for Option I follows the similar argument.\\
	\noindent{\bf Option II:}
	Define for $k= 0$ and $i\in[s_2']$ $$\BB_i^0 \defi \nabla^2 f_i(\xB^{0}) - \nabla^2 F(\xB^{0}),$$
	and define for $k\geq 1$ and $i\in[s_2]$ $$\BB_i^k \defi \nabla^2 f_i(\xB^{k}) - \nabla^2 f_i(\xB^{k-1}) - (\nabla^2 F(\xB^{k}) - \nabla^2 F(\xB^{k-1})).$$
	$\{\BB_i^k\}$ is a martingale difference.
	We have for all $k$ and $i$,
	\begin{equation*}
	\EBB[\BB_i^k|\xB^k] = 0.
	\end{equation*}
	Besides, using Assumption \ref{ass_first_order_smooth} for $k = 0$ to bound
	\begin{equation}
	\|\BB_i^0\| \leq \|\nabla^2 f_i(\xB^{0})\| +\|\nabla^2 F(\xB^{0})\| = 2L_1,
	\end{equation}
	and using Assumption \ref{ass_second_order_smooth} for $k\geq 1$ to bound
	\begin{equation*}
	\begin{aligned}
	\|\BB_i^k\| \leq& \|\nabla^2 f_i(\xB^{k}) - \nabla^2 f_i(\xB^{k-1})\| \\
	&+ \|\nabla^2 F(\xB^{k}) - \nabla^2 F(\xB^{k-1})\| \leq 2\sqrt{\epsilon L_2}.
	\end{aligned}
	\end{equation*}
	From the construction of $\HB^{k}$, we have $$\HB^k -\nabla^2 F(\xB^{k})= \sum_{i=1}^{s_2'}\frac{\BB_i^{0}}{s_2'} + \sum_{j=1}^{k}\sum_{i=1}^{s_2} \frac{\BB_i^{j}}{s_2}.$$
	Thus using the matrix Azuma's Inequality in Theorem 7.1 of \cite{tropp2012user} and $k\leq p_2$, we have
	\begin{equation*}
	\begin{aligned}
	&Pr\{\|\HB^k -\nabla^2 F(\xB^{k})\| \geq t\} \\
	\leq &d\cdot\exp\{ -\frac{t^2/8}{\sum_{i=1}^{s_2'}{4L_1^2}/{s_2'^2} + \sum_{j=1}^{k}\sum_{i=1}^{s_2}{4\epsilon L_2}/{s_2^2}}\}\\
	\leq &d\cdot\exp\{ -\frac{t^2/8}{{4L_1^2}/{s_2'} + 4p_2{\epsilon L_2}/{s_2}}\}.
	\end{aligned}
	\end{equation*}
	Consequently, we have
	\begin{equation*}
	\begin{aligned}
	&Pr\{\|\HB^k -\nabla^2 F(\xB^{k})\| \leq \sqrt{\epsilon L_2}\} \geq 1 - \delta/K_0.
	\end{aligned}
	\end{equation*}
	by taking $t=\sqrt{\epsilon L_2}$, $s_2'={16L_1^2}/{(\epsilon L_2)}\log({dK_0}/{\delta})$, $s_2 = {32L_1}/({\sqrt{\epsilon L_2}})\log({dK_0}/{\delta})$, and $p_2 = {L_1}/({2\sqrt{\epsilon L_2}})$.
	
	\noindent{\bf Option I:} The proof is similar to the one of Option II except that we replace $\BB_i^0$ with zero matrix.
	In such case, the matrix Azuma's Inequality implies
	\begin{equation*}
	\begin{aligned}
	&Pr\{\|\HB^k -\nabla^2 F(\xB^{k})\| \geq t\} \\
	\leq &d\cdot\exp\{ -\frac{t^2/8}{\sum_{j=1}^{k}\sum_{i=1}^{s_2}{4\epsilon L_2}/{s_2^2}}\}\leq d\cdot\exp\{ -\frac{t^2/8}{4p_2\epsilon L_2/s_2}\}.
	\end{aligned}
	\end{equation*}		
	Thus by taking $t=\sqrt{\epsilon L_2}$, $s_2 = 32\sqrt{n}\log({d}/{\delta})$, and $p_2 = \sqrt{n}$, we have the result.
	
	\noindent{\bf Amortized Complexity:}
	In option I, the choice of parameter ensures that: $s_2' \leq p_2\times s_2$ and in option II: $n \leq p_2\times s_2$.
	Consequently the amortized stochastic second-order oracle is bounded by $2s_2$.
\end{proof}

\subsection{Gradient Estimator: Case (1)}\label{resultsofcomplexity}
\begin{algorithm}[t]
	\floatname{algorithm}{Estimator}
	\caption{Gradient Estimator: Case (1)}
	\label{alg_SPIDER_estimator}
	\begin{algorithmic}[1]
		\IF{mod($k,p_1$)$=0$}
		\STATE $\gB^{k} := \nabla F(\xB^k)$
		\ELSE
		\STATE Draw $s_1$ samples indexed by $\GM$;
		\STATE $\gB^{k}= \nabla f(\xB^{k}; \GM)-\nabla f(\xB^{k-1}; \GM) + \gB^{k-1}$;
		\ENDIF
	\end{algorithmic}
\end{algorithm}
When stochastic second-order oracle complexity is prioritized, we directly employ the SPIDER gradient estimator to construct $\gB^k$ \cite{fang2018spider}.
Similar to the construction for $\HB^k$, the estimator $\gB^k$ is also construct in an epoch-wise manner as presented in Estimator \ref{alg_SPIDER_estimator}, where $p_1$ controls the epoch length and $s_1$ controls the minibatch size.

We now analyze the necessary stochastic first-order oracle complexity to meet the requirement in Theorem \ref{thm_TR_meta}.
\begin{lemma} \label{lemma_SPIDER}
	Assume Algorithm \ref{alg_str_1} takes the trust region radius $r = \sqrt{\epsilon/L_2}$.
	Estimator \ref{alg_SPIDER_estimator} produces estimator $\gB^{k}$ of the first order differential $\nabla F(\xB^{k})$ such that $\|\gB^{k} - \nabla F(\xB^{k})\|\leq \epsilon/6$ with probability at least $1-\delta/K_0$ for any $k\geq 0$, if we set $p_1 = \max\{1, \sqrt{{n\epsilon L_2}/({cL_1^2 \log\frac{K_0}{\delta}})}\}$ and $s_1 = \min\{n, \sqrt{{cnL_1^2 \log({K_0}/{\delta})}/({\epsilon L_2})}\}$, where the constant $c = 1152$.
	Consequently, the amortized per-iteration stochastic first-order oracle complexity to construct $\gB^{k}$ is $\min\{n, \sqrt{{ 4cnL_1^2 \log{(K_0}/{\delta})}/({\epsilon L_2})}\}$.
\end{lemma}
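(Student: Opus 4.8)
The plan is to mirror the martingale argument behind the Hessian estimator (Lemma~\ref{lemma_Hessian_estimator}), but to track the error in the vector $\ell_2$ norm and invoke a \emph{dimension-free} concentration inequality, so that no $\log d$ factor enters $s_1$. Fix an epoch and, for notational convenience, take $0 \le k < p_1$ so that $\gB^0 = \nabla F(\xB^0)$. Unrolling the recursion in Estimator~\ref{alg_SPIDER_estimator},
\begin{equation*}
\gB^k - \nabla F(\xB^k) = \sum_{j=1}^{k}\sum_{i \in \GM_j}\frac{\bB_i^j}{s_1}, \qquad \bB_i^j \defi \nabla f_i(\xB^j) - \nabla f_i(\xB^{j-1}) - \big(\nabla F(\xB^j) - \nabla F(\xB^{j-1})\big),
\end{equation*}
where $\GM_j$ is the fresh size-$s_1$ minibatch drawn at step $j$. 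Ordering the pairs $(j,i)$ lexicographically and using the filtration $\mathcal{F}_{j-1}$ generated by the iterates and all samples drawn up to step $j-1$, the family $\{\bB_i^j/s_1\}$ is a martingale difference sequence: $\xB^j$ and $\xB^{j-1}$ are $\mathcal{F}_{j-1}$-measurable before $\GM_j$ is drawn and the indices are i.i.d.\ uniform, so $\EBB[\bB_i^j \mid \mathcal{F}_{j-1}] = 0$.

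First I would bound the increments. By Assumption~\ref{ass_first_order_smooth} and the trust-region constraint $\|\hB^{j-1}\| \le r = \sqrt{\epsilon/L_2}$,
\begin{equation*}
\|\bB_i^j\| \le 2L_1\|\xB^j - \xB^{j-1}\| = 2L_1\|\hB^{j-1}\| \le 2L_1\sqrt{\epsilon/L_2},
\end{equation*}
so each rescaled term has norm at most $2L_1\sqrt{\epsilon/L_2}/s_1$, and there are fewer than $p_1 s_1$ of them; hence the sum of squared increment bounds is at most $p_1 s_1 \cdot 4L_1^2\epsilon/(L_2 s_1^2) = 4p_1 L_1^2 \epsilon/(L_2 s_1)$.

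Next I would apply a dimension-free Azuma--Hoeffding inequality for Hilbert-space-valued martingales with bounded increments (e.g.\ Pinelis' inequality), which gives a tail of the form $\Pr\{\|\gB^k - \nabla F(\xB^k)\| \ge t\} \le 2\exp\!\big(-c_0\, t^2 L_2 s_1/(p_1 L_1^2 \epsilon)\big)$ for an absolute constant $c_0$. Setting $t = \epsilon/6$ and using that the prescribed parameters satisfy $p_1 s_1 = n$ and $p_1/s_1 = \epsilon L_2/(cL_1^2\log(K_0/\delta))$, the exponent reduces to a fixed multiple of $c\log(K_0/\delta)$; the choice $c = 1152$ makes it at least $\log(2K_0/\delta)$, so the failure probability is at most $\delta/K_0$. (When $n \le cL_1^2\log(K_0/\delta)/(\epsilon L_2)$, the $\min$/$\max$ in the parameter choice forces $s_1 = n$ and $p_1 = 1$, i.e.\ exact gradients, and the claim is immediate.) For the amortized cost, each epoch spends $n$ oracle calls on the start-of-epoch full gradient and $2s_1$ on each subsequent iteration ($\GM_j$ is evaluated at both $\xB^j$ and $\xB^{j-1}$); since $n \le p_1 s_1$ the former amortizes to at most $s_1$ per iteration, and adding the recurrent cost yields the stated per-iteration complexity $\min\{n,\sqrt{4cnL_1^2\log(K_0/\delta)/(\epsilon L_2)}\}$.

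The main obstacle is obtaining concentration \emph{without} a dimension factor: the matrix Azuma inequality used in Lemma~\ref{lemma_Hessian_estimator} would inject a $\log d$ term into $s_1$, so one must instead rely on a genuinely dimension-free martingale inequality in $\ell_2$. The other point requiring care is checking that the lexicographically ordered family $\{\bB_i^j\}$ is adapted to a filtration in which both $\xB^j$ and $\xB^{j-1}$ are measurable, so that the conditional-mean-zero property truly holds; everything else is routine bookkeeping with the Lipschitz bound of Assumption~\ref{ass_first_order_smooth} and the explicit step-size control $r = \sqrt{\epsilon/L_2}$ from Theorem~\ref{thm_TR_meta}.
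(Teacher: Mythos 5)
Your proposal is correct and follows essentially the same route as the paper's proof: unroll the recursion into the martingale difference sum $\sum_{j}\sum_{i}\aB_i^j/s_1$, bound each increment by $2L_1\sqrt{\epsilon/L_2}$ via the $L_1$-Lipschitz gradient and the trust-region radius, apply a dimension-free vector Azuma-type inequality with $t=\epsilon/6$, and read off the parameter constraint $\tfrac{cL_1^2}{L_2}\log\tfrac{K_0}{\delta}\le \tfrac{\epsilon s_1}{p_1}$. Your explicit appeal to a Pinelis-type Hilbert-space inequality (and your care with the filtration) is in fact slightly more rigorous than the paper's bare citation of ``Azuma's inequality,'' and your remaining differences (verifying the prescribed $p_1,s_1$ rather than deriving them from the paper's small optimization program) are cosmetic.
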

The proof for Lemma \ref{lemma_SPIDER} is similar to the one of Lemma \ref{lemma_Hessian_estimator} and is deferred to Appendix \ref{section_proof_lemma_spider}.\\
Lemma \ref{lemma_SPIDER} and Lemma \ref{lemma_Hessian_estimator} only guarantee the differential estimators satisfy the requirement \eqref{conditiongradient} in a single iteration and can be extended to hold for all $k$ by using the union bound with $K_0 = 2K$.
Combining such lifted result with Theorem \ref{thm_TR_meta}, we can establish the bound of computational complexity as follows.
\begin{corollary}
	Assume Algorithm \ref{alg_str_1} will use Estimator \ref{alg_SPIDER_estimator} to construct the first-order differential estimator $\gB^k$ and use Estimator \ref{alg_HB_estimator} to construct the second-order differential estimator $\HB^k$.
	To find an $12\epsilon$-SOSP with probability at least $1-\delta$, the overall stochastic first-order oracle complexity is $\min\{{6n\sqrt{L_2}\Delta}/{\epsilon^{1.5}}, {7000\sqrt{n}L_1}/{\epsilon^2}\log({L_2}/{\delta \epsilon})\}$ and the overall stochastic second-order oracle complexity is $\OM(\min\{{\sqrt{nL_2}\Delta}/{\epsilon^{1.5}}\log({dL_2}/{\delta\epsilon}), {L_1}/{\epsilon^2}\log({L_2d}/{\delta \epsilon}) \})$.
	\label{corollary_str_1}
\end{corollary}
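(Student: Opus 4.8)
The plan is to glue together the three ingredients already in hand: the iteration count of Theorem~\ref{thm_TR_meta}, the Hessian-accuracy guarantee of Lemma~\ref{lemma_Hessian_estimator}, and the gradient-accuracy guarantee of Lemma~\ref{lemma_SPIDER}. Note that Algorithm~\ref{alg_str_1} is precisely MetaAlgorithm~\ref{alg_stochastic_trust_region} instantiated with Estimator~\ref{alg_SPIDER_estimator}, Estimator~\ref{alg_HB_estimator}, and radius $r=\sqrt{\epsilon/L_2}$, so once the accuracy condition \eqref{conditiongradient} is in force at every visited iterate, Theorem~\ref{thm_TR_meta} applies verbatim.

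First I would pin down the high-probability event. Theorem~\ref{thm_TR_meta} provides the a priori iteration bound $K \le 6\sqrt{L_2}\Delta/\epsilon^{1.5}$, and this bound depends only on \eqref{conditiongradient}, not on the estimator sample sizes. So I set $K_0 = 2K$ in both Lemma~\ref{lemma_Hessian_estimator} and Lemma~\ref{lemma_SPIDER}. Each lemma then guarantees that at a given iteration $\gB^k$ (resp. $\HB^k$) fails its accuracy requirement with probability at most $\delta/K_0$; a union bound over the at most $K$ iterations and the two estimators bounds the total failure probability by $2K\cdot\delta/K_0=\delta$. Hence, with probability at least $1-\delta$, the hypothesis \eqref{conditiongradient} of Theorem~\ref{thm_TR_meta} holds for all $k$, and on this event the algorithm terminates within $K\le 6\sqrt{L_2}\Delta/\epsilon^{1.5}$ iterations and outputs a $12\epsilon$-SOSP.

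It then remains to multiply the iteration count by the amortized per-iteration oracle costs. For the first-order oracle, Lemma~\ref{lemma_SPIDER} gives amortized cost $\min\{n,\,\sqrt{4cnL_1^2\log(K_0/\delta)/(\epsilon L_2)}\}$, so the total is at most $6\sqrt{L_2}\Delta\,\epsilon^{-1.5}\cdot\min\{n,\,\sqrt{4cnL_1^2\log(K_0/\delta)/(\epsilon L_2)}\}$; distributing the minimum gives the first branch $6n\sqrt{L_2}\Delta/\epsilon^{1.5}$ at once, while the second branch is a numerical constant times $\sqrt{n}L_1\Delta\,\epsilon^{-2}\sqrt{\log(K_0/\delta)}$. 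For the second-order oracle, Lemma~\ref{lemma_Hessian_estimator} gives amortized cost $2s_2=\min\{64\sqrt{n}\log(dK_0/\delta),\,64L_1\epsilon^{-1/2}L_2^{-1/2}\log(dK_0/\delta)\}$, and multiplying by $K$ yields $\OM(\min\{\sqrt{nL_2}\Delta\,\epsilon^{-1.5}\log(dK_0/\delta),\,L_1\epsilon^{-2}\log(dK_0/\delta)\})$.

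The only step that is not purely mechanical is resolving the apparent self-reference: the sample sizes $s_1,s_2,s_2'$ carry a $\log(K_0/\delta)$ with $K_0=2K$, yet $K$ is what we are bounding. This is handled by substituting the a priori bound $K_0=2K\le 12\sqrt{L_2}\Delta/\epsilon^{1.5}$ from Theorem~\ref{thm_TR_meta} inside the logarithm, so $\log(K_0/\delta)=\OM(\log(L_2\Delta/(\delta\epsilon)))=\OM(\log(L_2/(\delta\epsilon)))$ once $\Delta$ is treated as an absolute constant; folding this, the $\Delta$ prefactor, and $\sqrt{\log}\le\log$ into the stated numerical constant produces the clean forms $7000\sqrt{n}L_1\epsilon^{-2}\log(L_2/(\delta\epsilon))$ and $\OM(\min\{\sqrt{nL_2}\Delta\,\epsilon^{-1.5}\log(dL_2/(\delta\epsilon)),\,L_1\epsilon^{-2}\log(L_2d/(\delta\epsilon))\})$. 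A minor care-point: when the epoch lengths $p_1$ or $p_2$ exceed $K$, the amortization in Lemmas~\ref{lemma_Hessian_estimator}--\ref{lemma_SPIDER} is no longer tight, but then the whole run is (part of) a single epoch whose cost is dominated by one full-batch differential evaluation, which is absorbed into the first ($\OM(n/\epsilon^{1.5})$-type) branch of each minimum, so the advertised bounds hold in every regime.
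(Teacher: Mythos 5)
Your proposal is correct and follows exactly the route the paper intends: a union bound with $K_0=2K$ to lift the per-iteration guarantees of Lemmas~\ref{lemma_Hessian_estimator} and \ref{lemma_SPIDER} to all iterations, then multiplying the iteration bound $K\le 6\sqrt{L_2}\Delta/\epsilon^{1.5}$ from Theorem~\ref{thm_TR_meta} by the amortized per-iteration oracle costs (the paper states this combination in one sentence and omits the details). Your additional remarks on resolving the $\log(K_0/\delta)$ self-reference via the a priori iteration bound and on the regime where an epoch exceeds $K$ are sound refinements of the same argument, not a different approach.
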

From Corollary \ref{corollary_str_1} we see that $\tilde{\OM}(\min\{\sqrt{n}/{\epsilon^{1.5}}, 1/{\epsilon^2}\})$ stochastic second-order oracle queries are sufficient for STR$_1$ to find an $\epsilon$-SOSP which is significantly better than both the subsampled cubic regularization method $\OM(1/\epsilon^{2.5})$ \cite{kohler2017sub} and the variance reduction based ones $\OM(n^{2/3}/\epsilon^{1.5})$ \cite{zhou2018sample,zhang2018adaptive}.
\section{Stochastic Trust Region Method: Type II}
In the previous section, we focus on the setting where the stochastic second-order oracle complexity is prioritized over the stochastic first-order oracle complexity and STR$_1$ achieves the state-of-the-art efficiency.
In this section, we consider a different complexity measure where the first-order and second-order oracle complexities are treated equally and our goal is to minimize the maximum of them.
We note that, currently the best result is $\OM(n^{4/5}/\epsilon^{1.5})$ of the SVRC method \cite{zhou2018stochastic}.

Since the Hessian estimator $\HB^k$ of STR$_1$ already delivers the superior $\OM(\sqrt{n}/\epsilon^{1.5})$ stochastic Hessian complexity, in STR$_2$ (see Algorithm \ref{alg_str_2}), we retain Estimator \ref{alg_HB_estimator} for second-order differential estimation and use Estimator \ref{alg_CASPIDER_estimator} to further reduce the stochastic gradient complexity.
\begin{algorithm}[t]
	\caption{STR$_2$}
	\label{alg_str_2}
	\begin{algorithmic}[1]
		\REQUIRE Initializer $\xB^{0}$, step size $r$, number of iterations $K$
		\FOR{$k = 1$ {\bfseries to} $K$}
		\STATE Construct gradient estimator $\gB^{k}$ by Estimator \ref{alg_CASPIDER_estimator};
		\STATE Construct Hessian estimator $\HB^{k}$ by Estimator \ref{alg_HB_estimator};
		\STATE Compute $\hB^{k}$ by solving (\ref{eqn_qcqp_inexact});
		\STATE $\xB^{k+1} := \xB^{k} + \hB^{k}$;
		\IF{$\lambda^{k} \leq {2\sqrt{\epsilon/L_2}}$}
		\STATE Output $\xB_\epsilon = \xB^{k+1}$;
		\ENDIF
		\ENDFOR
	\end{algorithmic}
\end{algorithm}
\begin{algorithm}[t]
	\floatname{algorithm}{Estimator}
	\caption{Gradient Estimator: Case (2)}
	\label{alg_CASPIDER_estimator}
	\begin{algorithmic}[1]
		\IF{mod($k,p_1$)$=0$}
		\STATE Let $\tilde{\xB} := \xB^{k}$
		\STATE $\gB^{k} := \nabla F(\tilde{\xB})$
		\ELSE
		\STATE Draw $s_1$ samples indexed by $\GM$;
		\STATE $\cB^k = [\nabla^2 F(\tilde{\xB}) -\nabla^2 f_i(\tilde{\xB}; \GM)](\xB^k - \xB^{k-1})$;
		\STATE $\gB^{k}= \nabla f(\xB^{k}; \GM)-\nabla f(\xB^{k-1}; \GM) + \gB^{k-1} + \cB^k$;
		\ENDIF
	\end{algorithmic}
\end{algorithm}

\begin{figure*}[t]
	\begin{center}
		\setlength{\tabcolsep}{0.8pt} 
		\begin{tabular}{cccc}
			\includegraphics[width=0.245\linewidth]{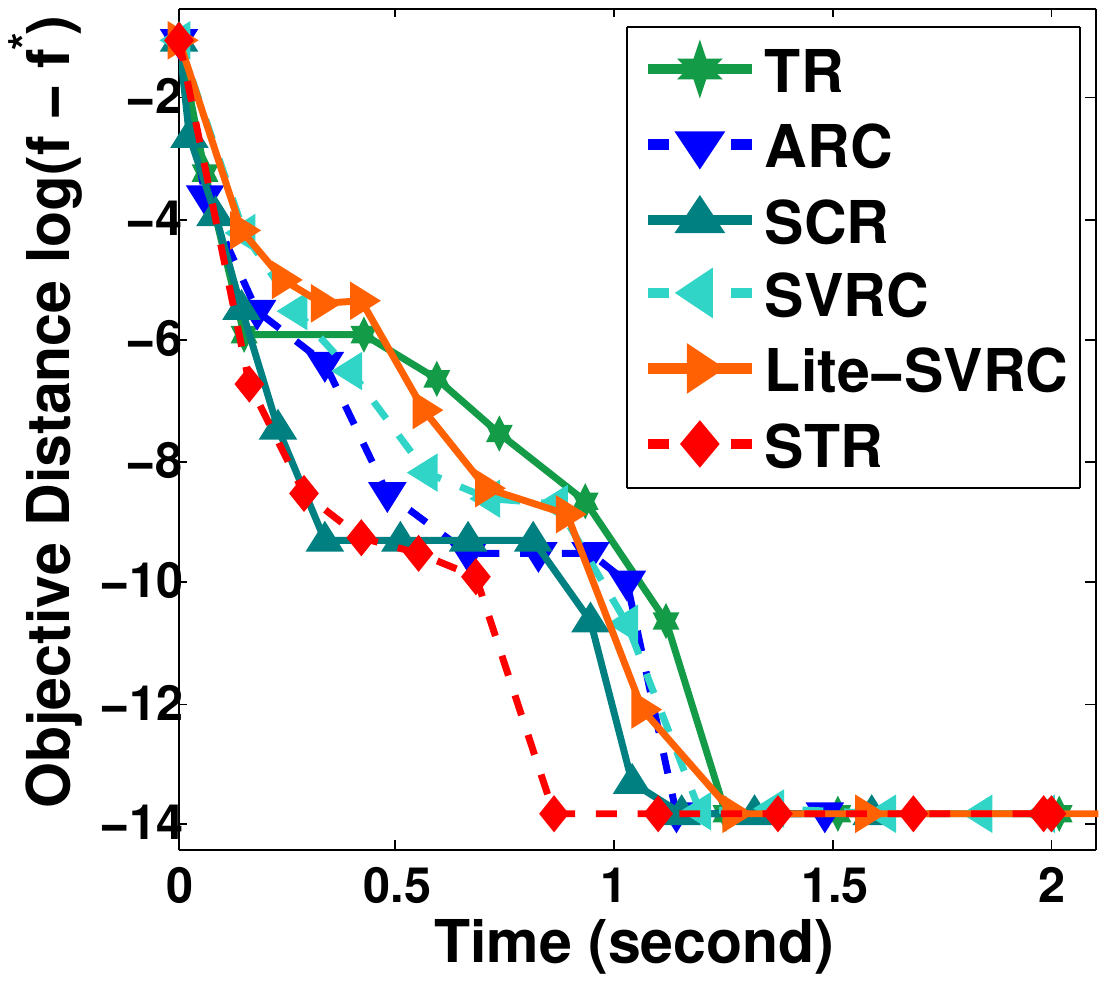}&
			\includegraphics[width=0.245\linewidth]{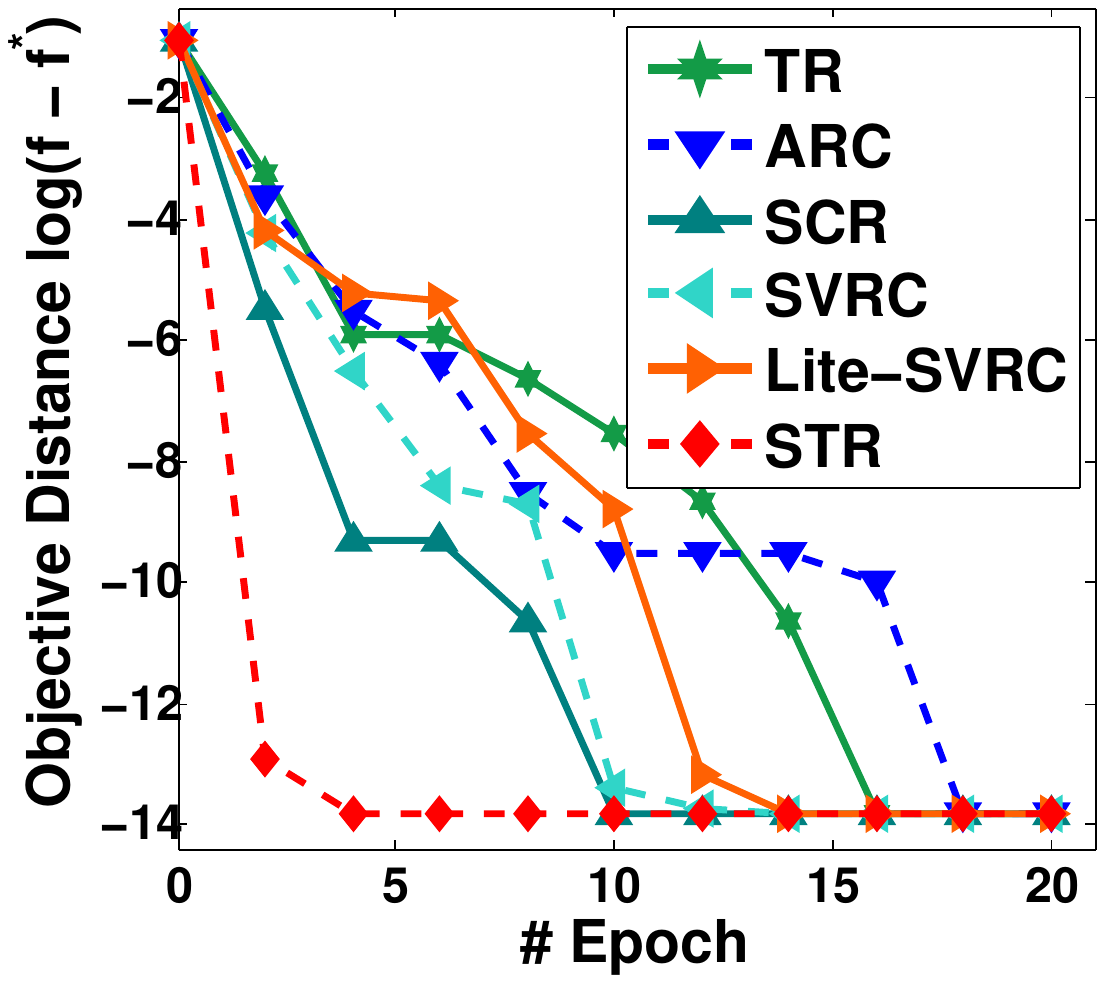}&
			\includegraphics[width=0.245\linewidth]{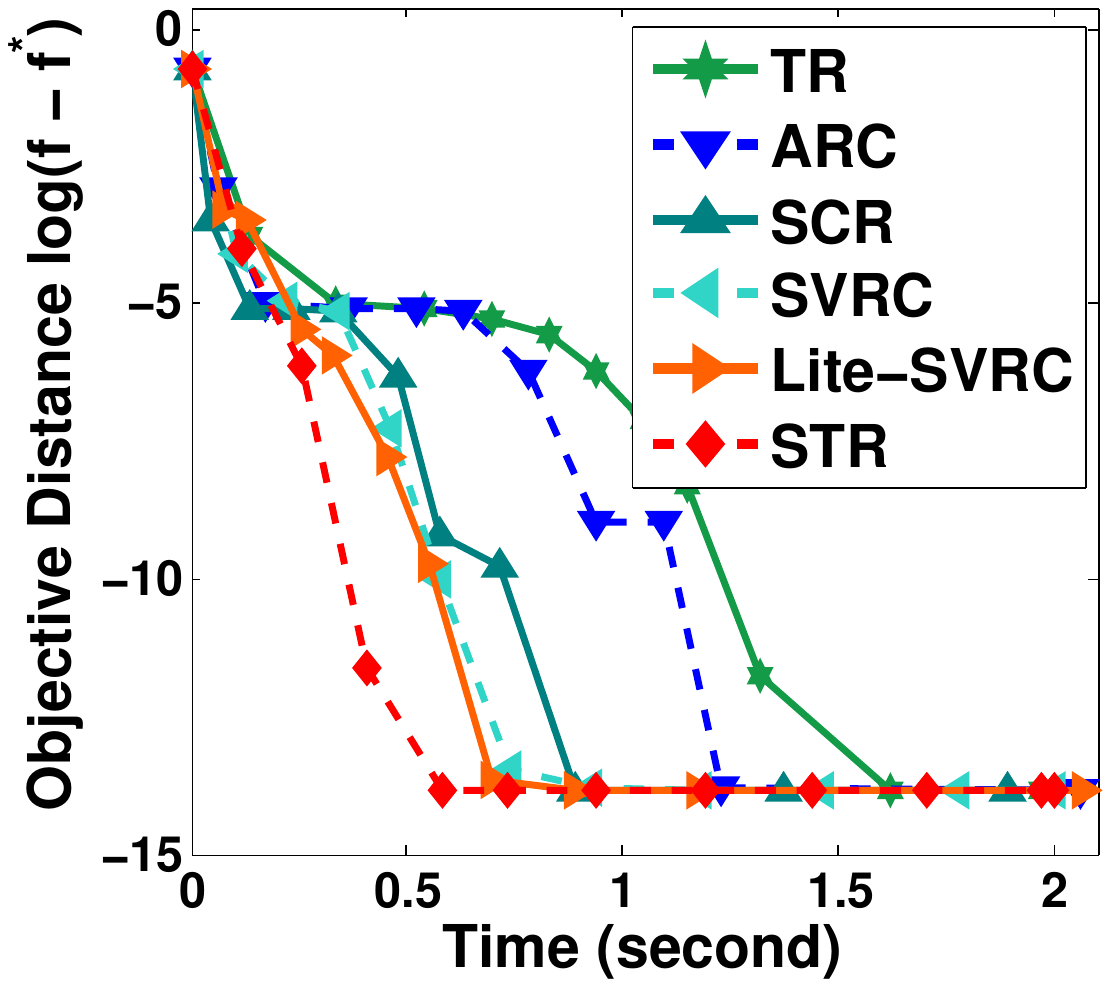}&
			\includegraphics[width=0.245\linewidth]{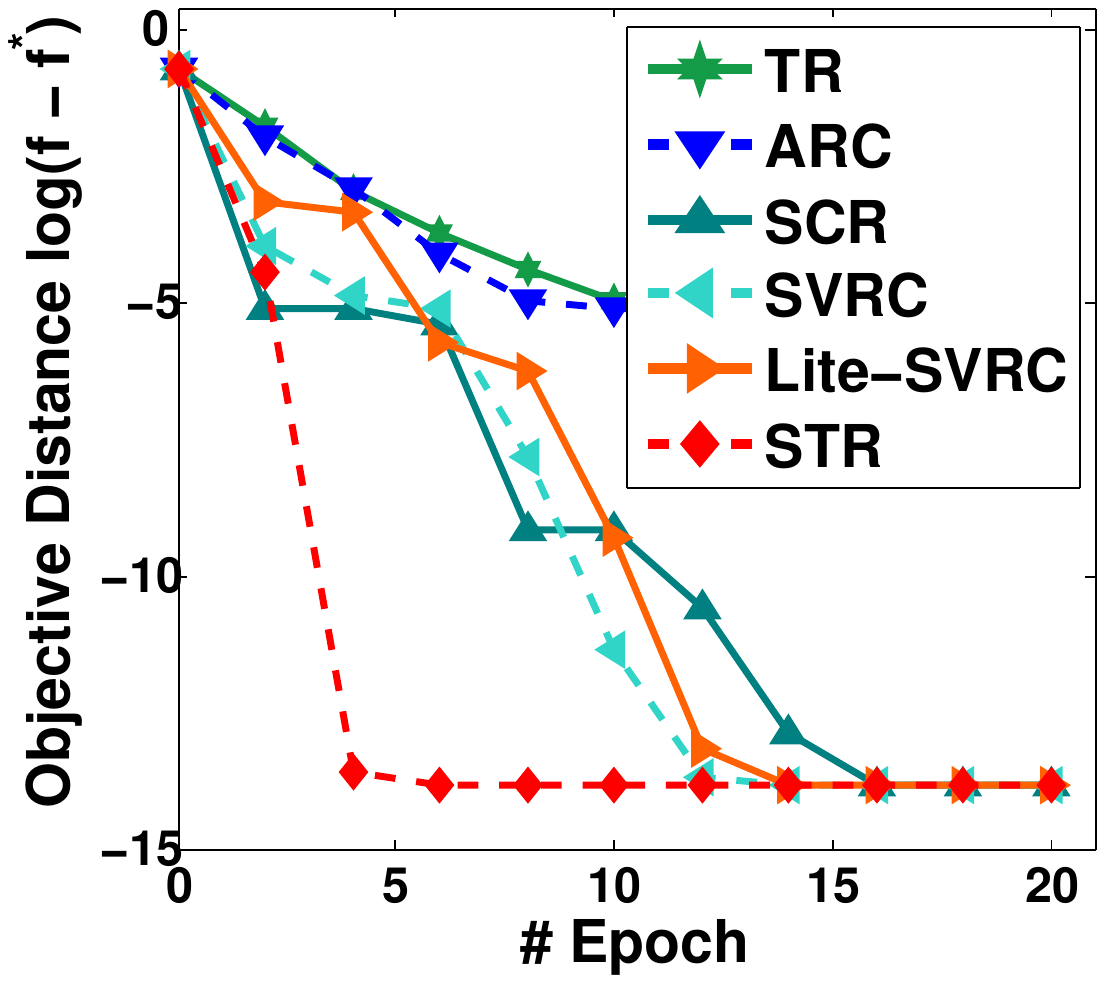}\\
			\multicolumn{2}{c}{{\small{(a) \textsf{a9a}}}}& \multicolumn{2}{c}{{\small{ (b) \textsf{ijcnn}}}}\\
			\includegraphics[width=0.245\linewidth]{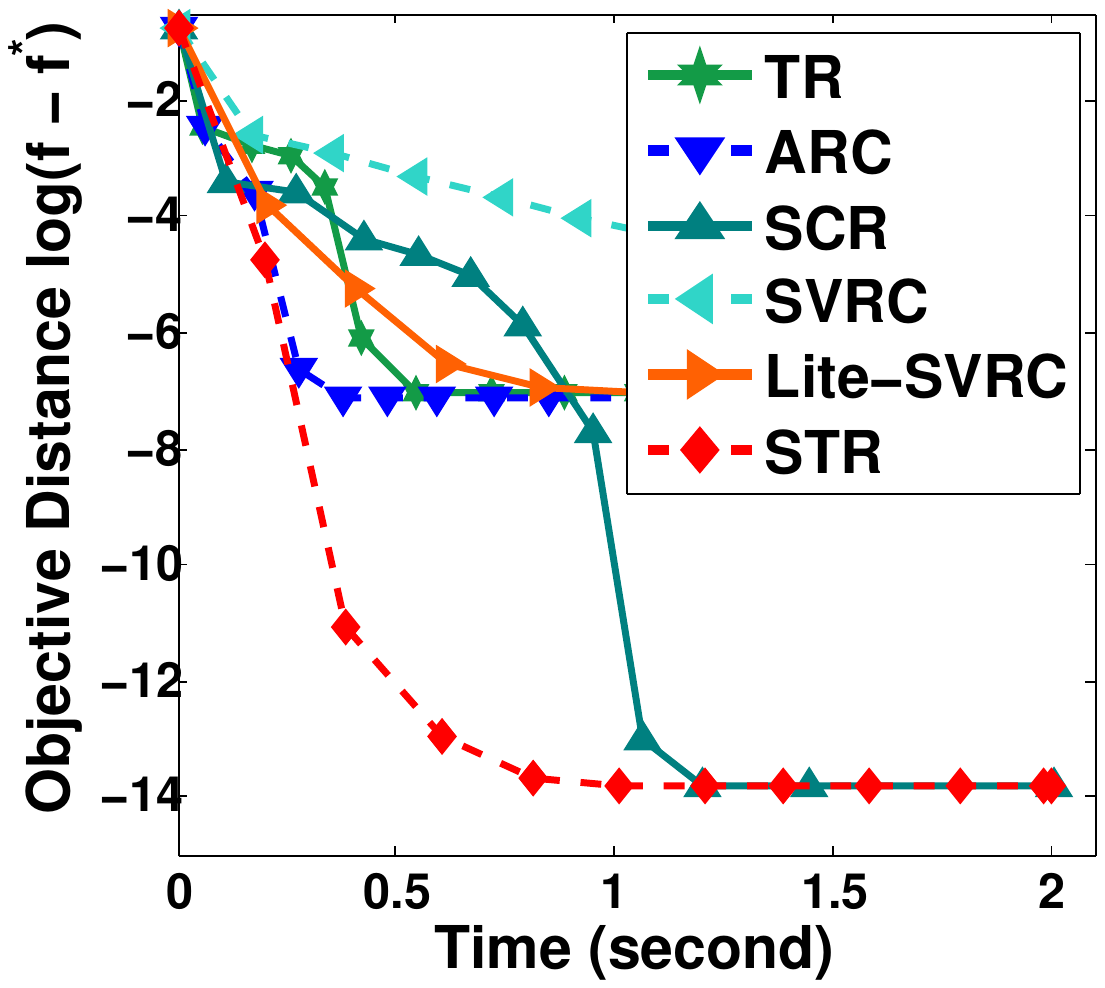}&
			\includegraphics[width=0.245\linewidth]{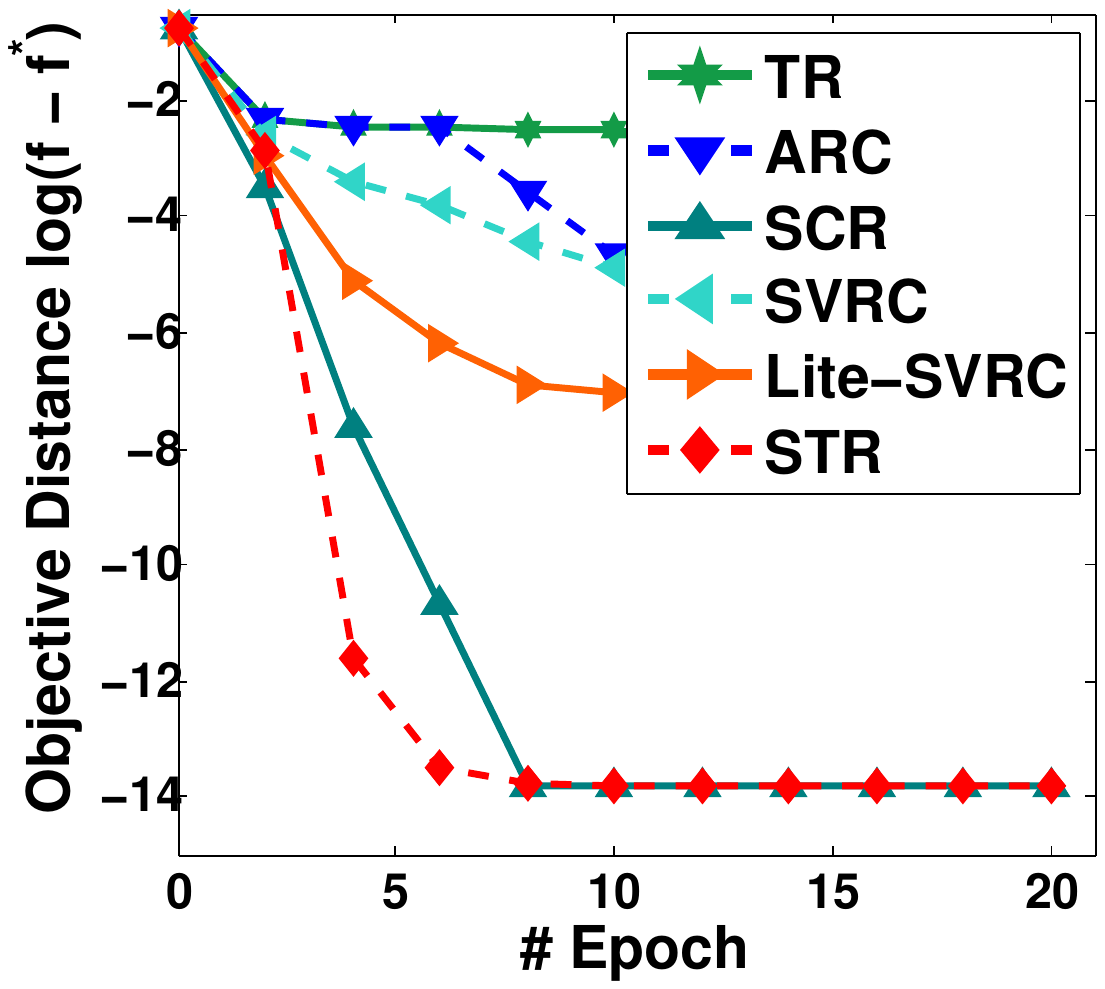}&
			\includegraphics[width=0.245\linewidth]{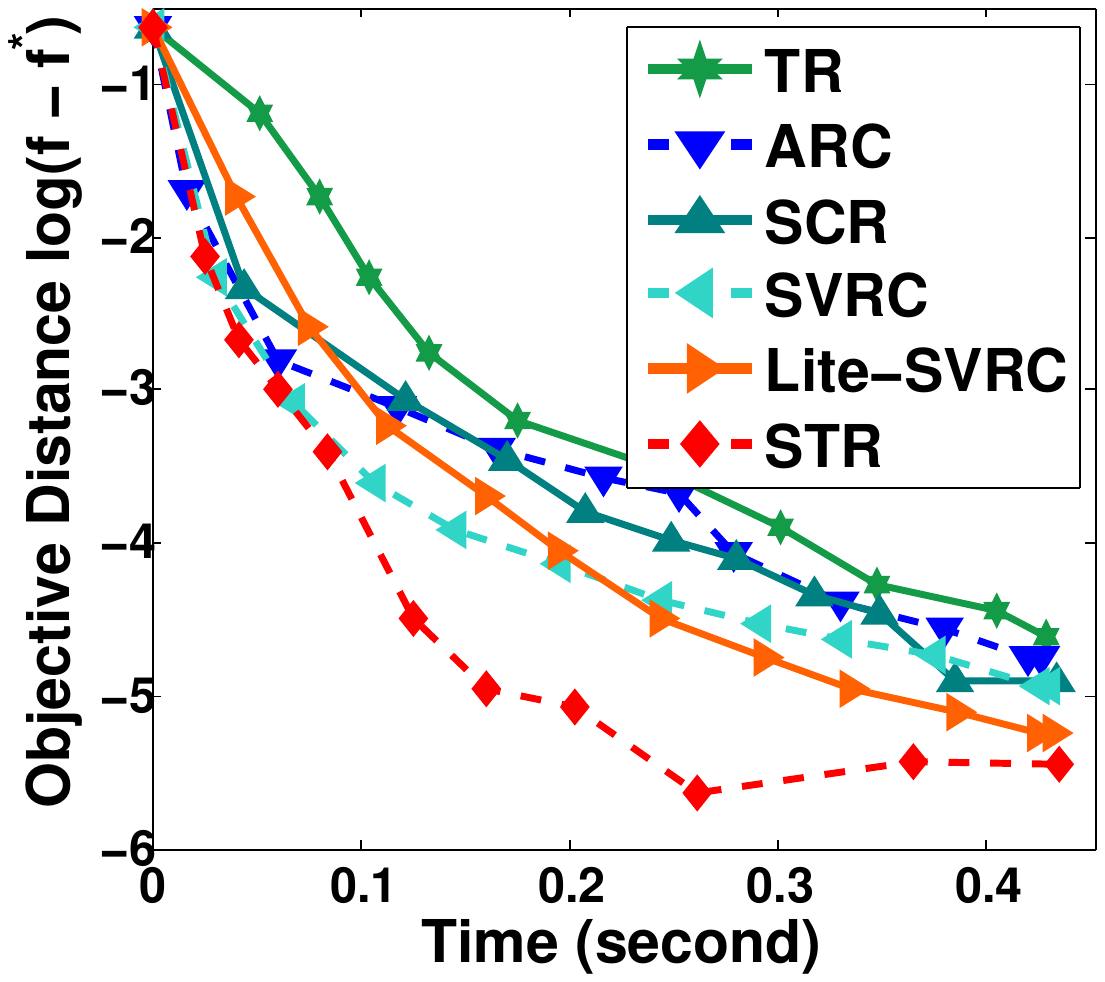}&
			\includegraphics[width=0.245\linewidth]{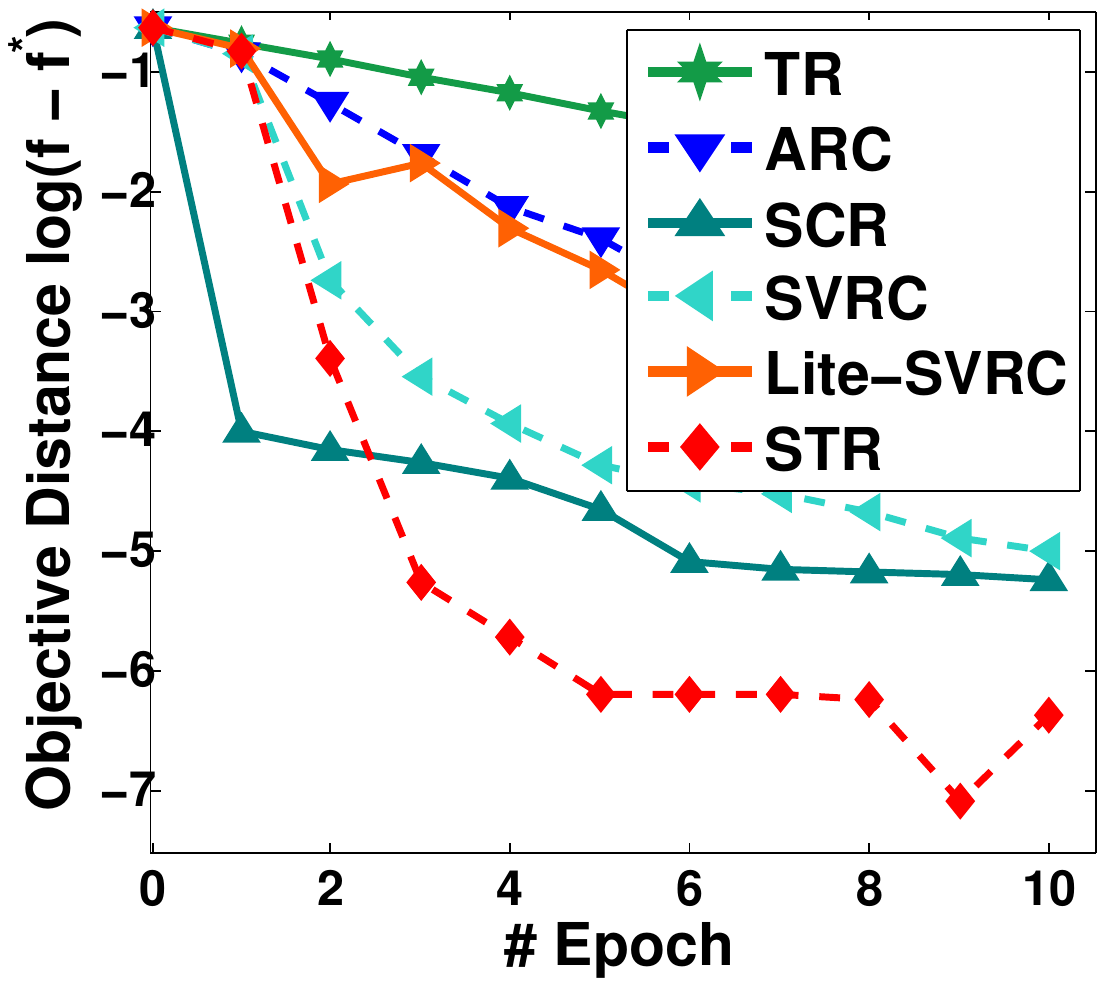}\\
			\multicolumn{2}{c}{{\small{(c) \textsf{codrna}}}}& \multicolumn{2}{c}{{\small{ (d) \textsf{phishing}}}}\\
		\end{tabular}
	\end{center}
	\vspace{-1.2em}
	\caption{Comparison on  the logistic regression with nonconvex regularizer. }
	\label{comparisonlogistic}
	\vspace{-1.0em}
\end{figure*}
\begin{figure*}[t]
	\begin{center}
		\setlength{\tabcolsep}{0.8pt} 
		\begin{tabular}{cccc}
			\includegraphics[width=0.245\linewidth]{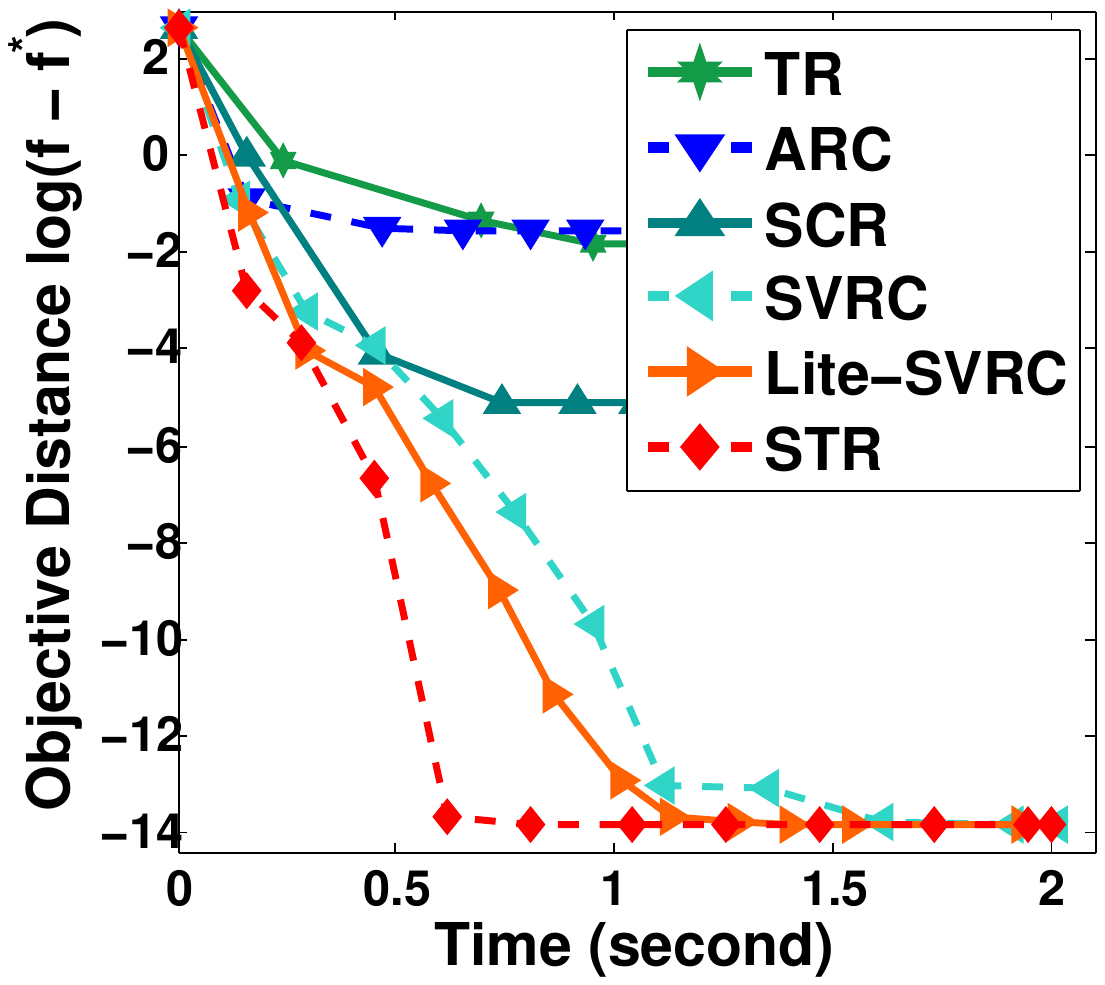}&
			\includegraphics[width=0.245\linewidth]{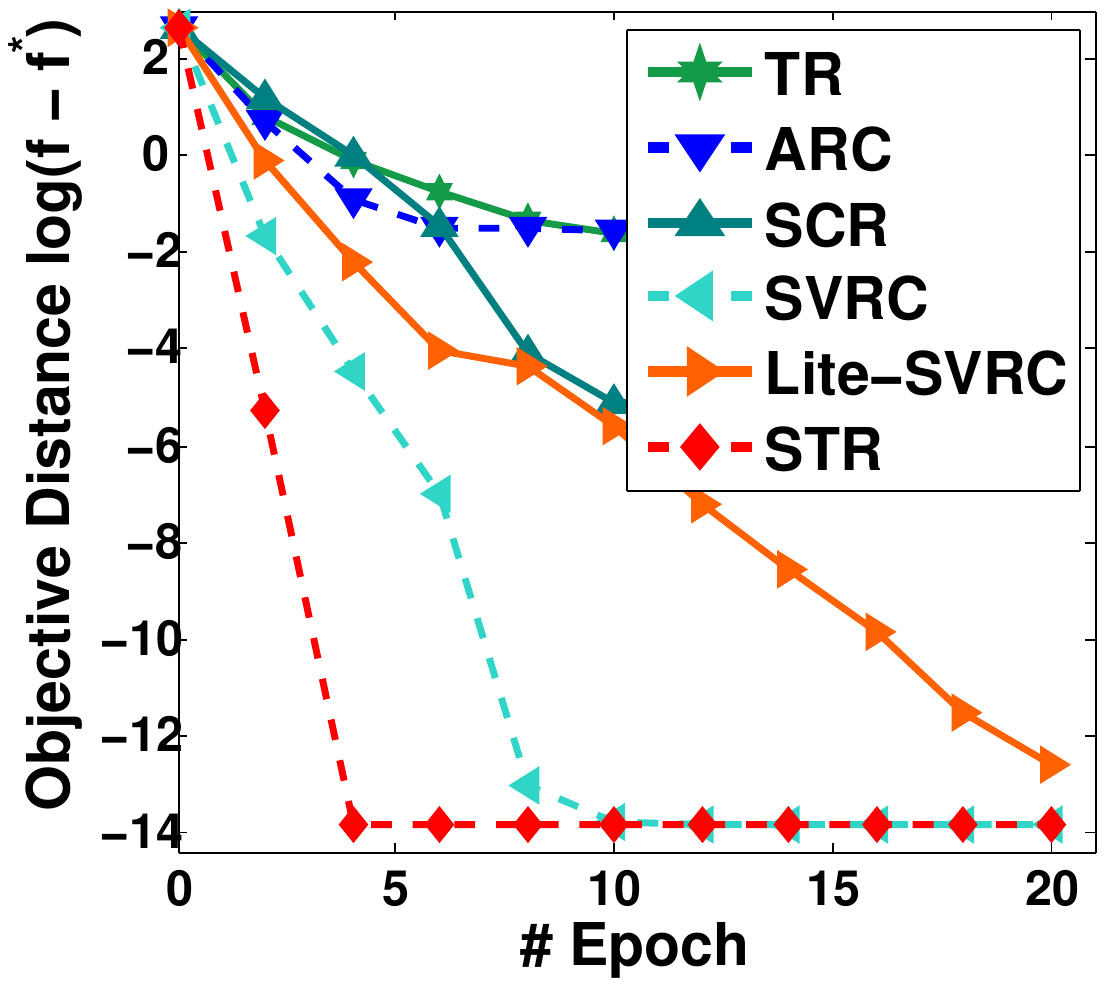}&
			\includegraphics[width=0.245\linewidth]{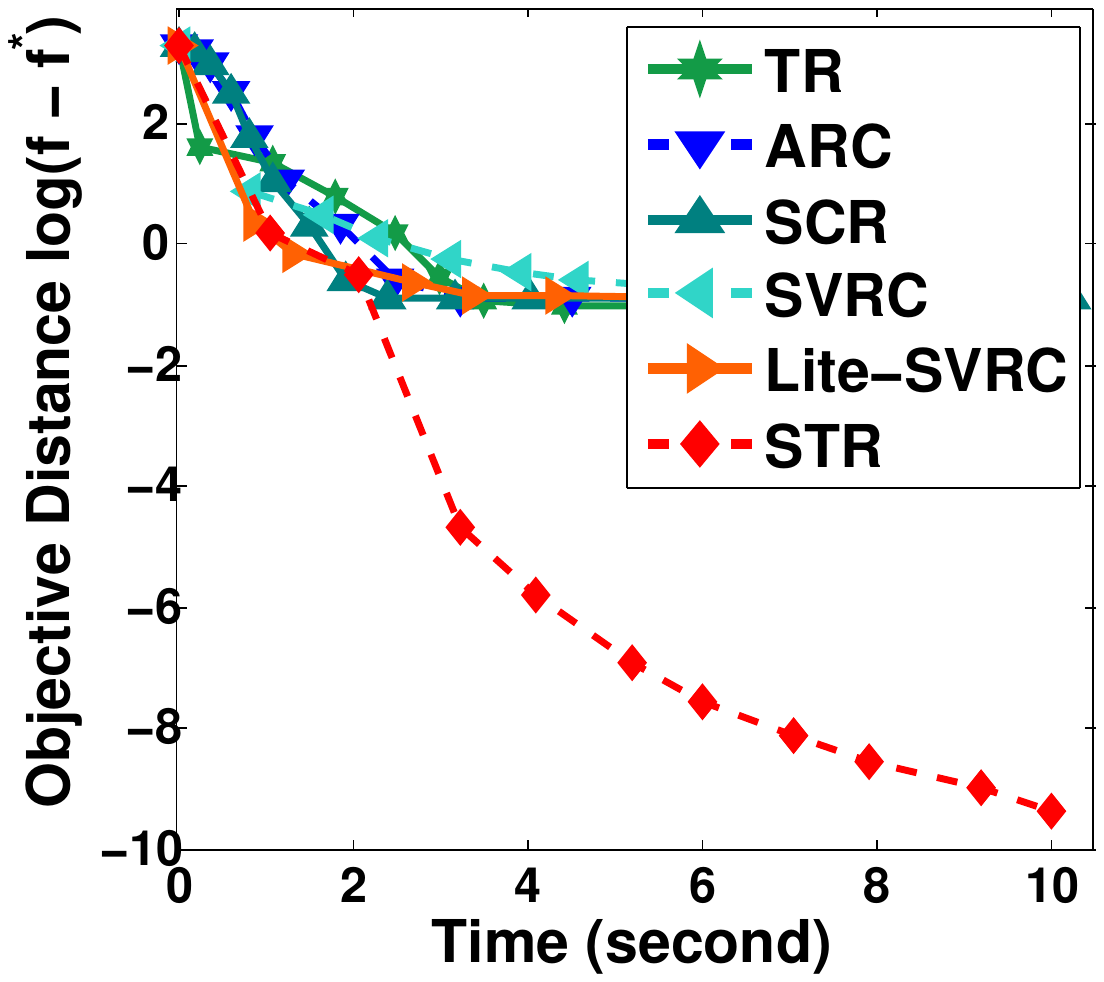}&
			\includegraphics[width=0.245\linewidth]{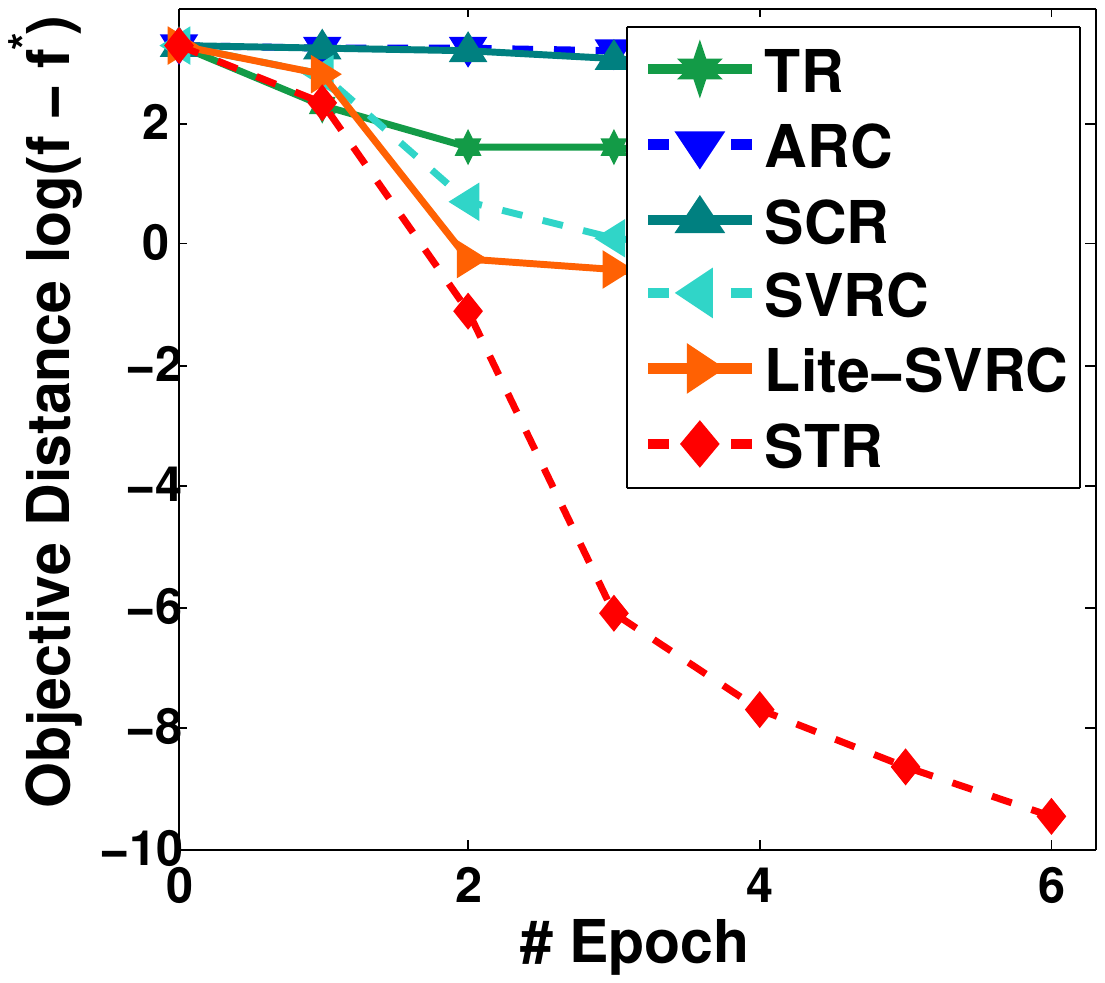}\\
			\multicolumn{2}{c}{{\small{(a) \textsf{a9a}}}}& \multicolumn{2}{c}{{\small{ (b) \textsf{w8a}}}}\\
			\includegraphics[width=0.245\linewidth]{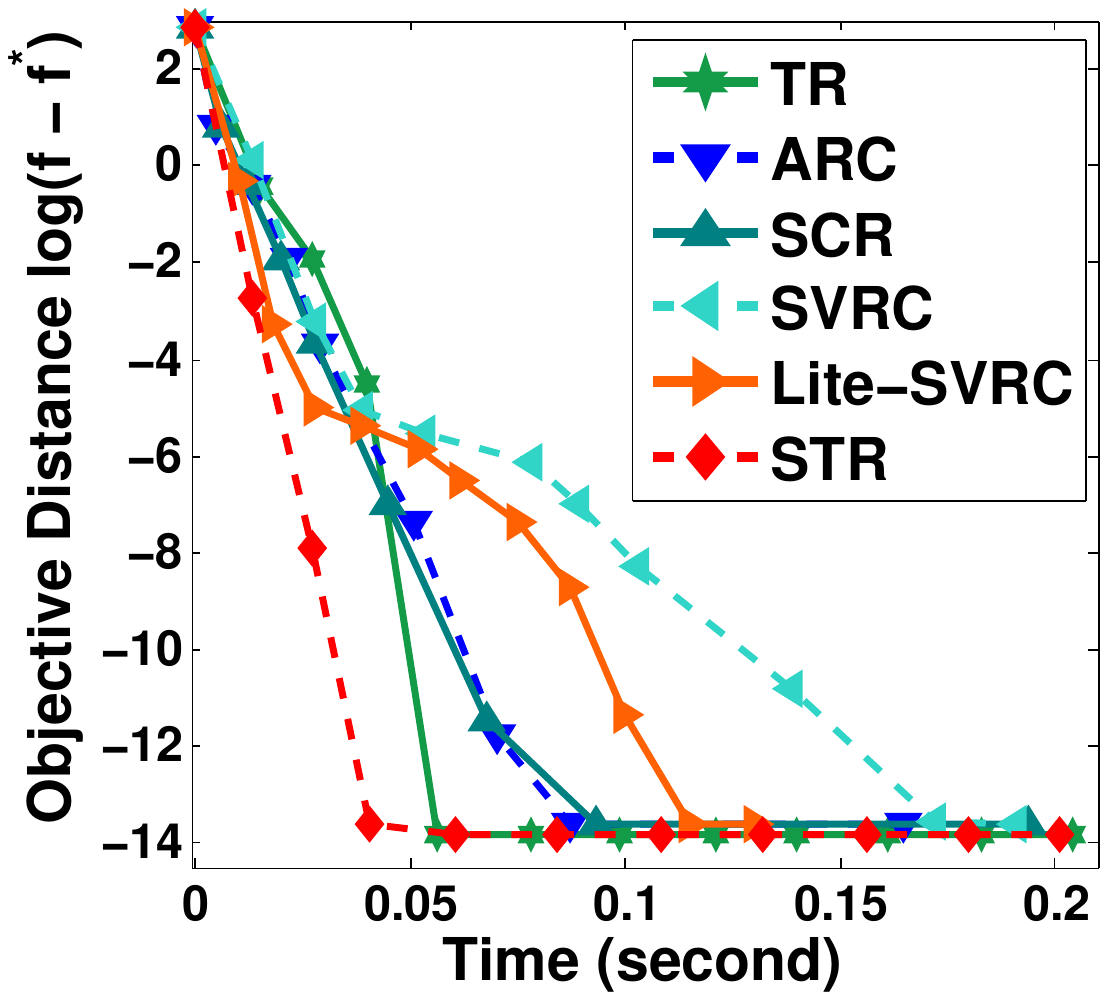}&
			\includegraphics[width=0.245\linewidth]{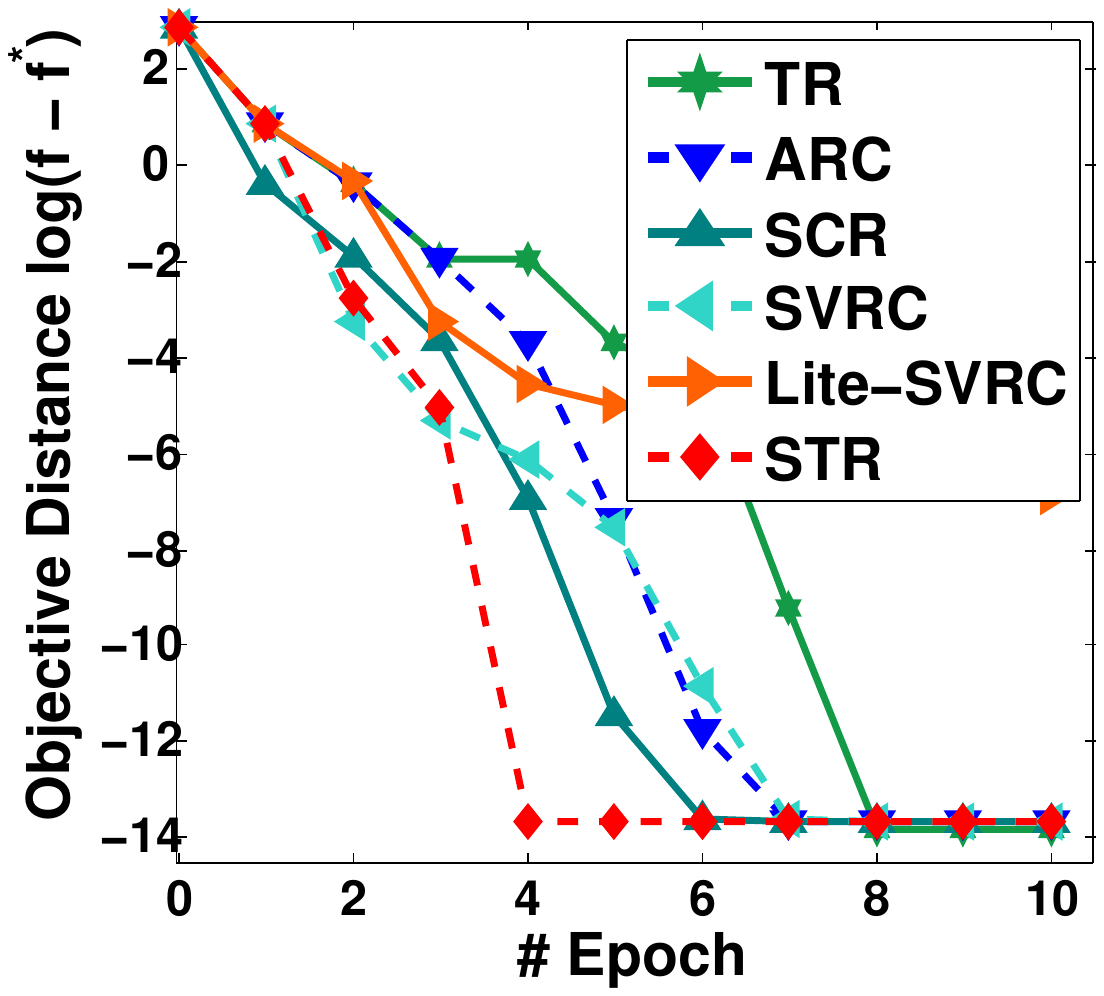}&
			\includegraphics[width=0.243\linewidth]{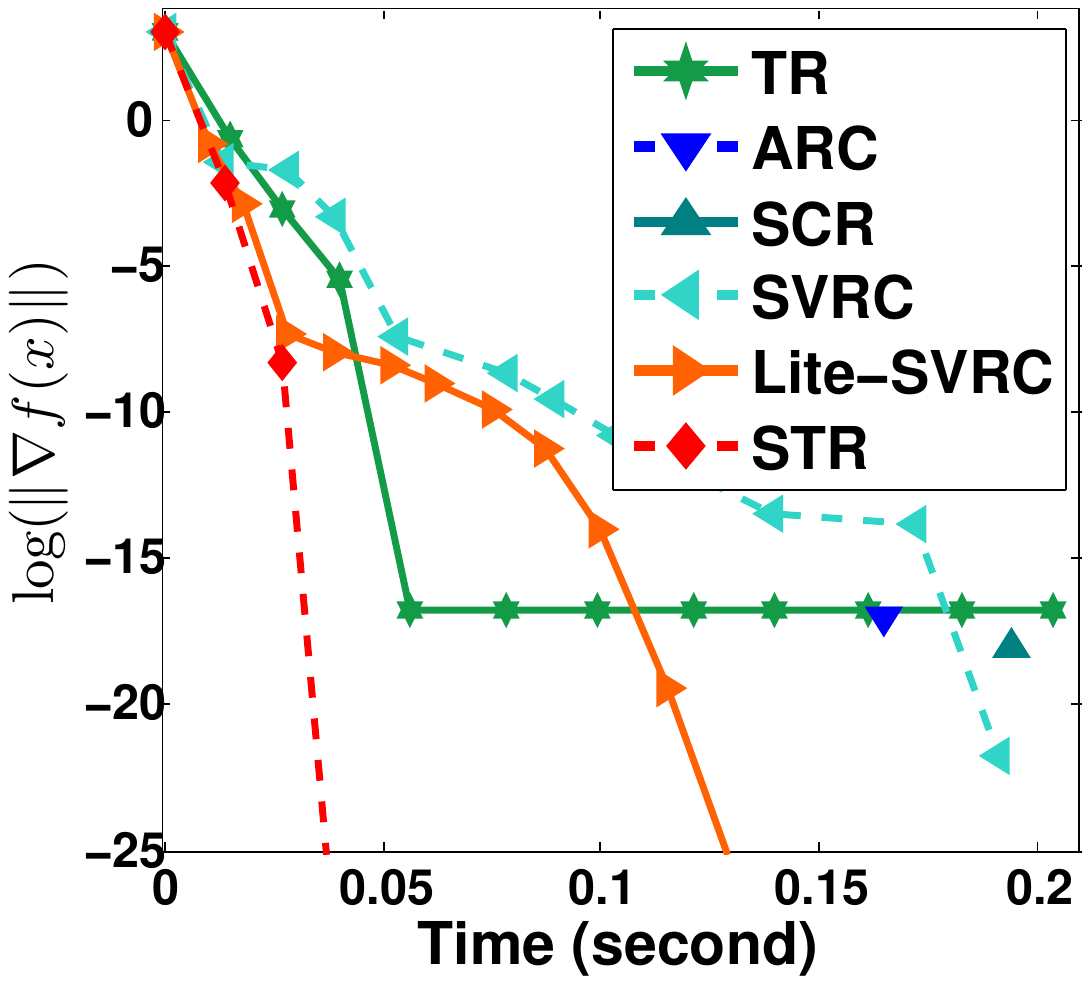}&
			\includegraphics[width=0.243\linewidth]{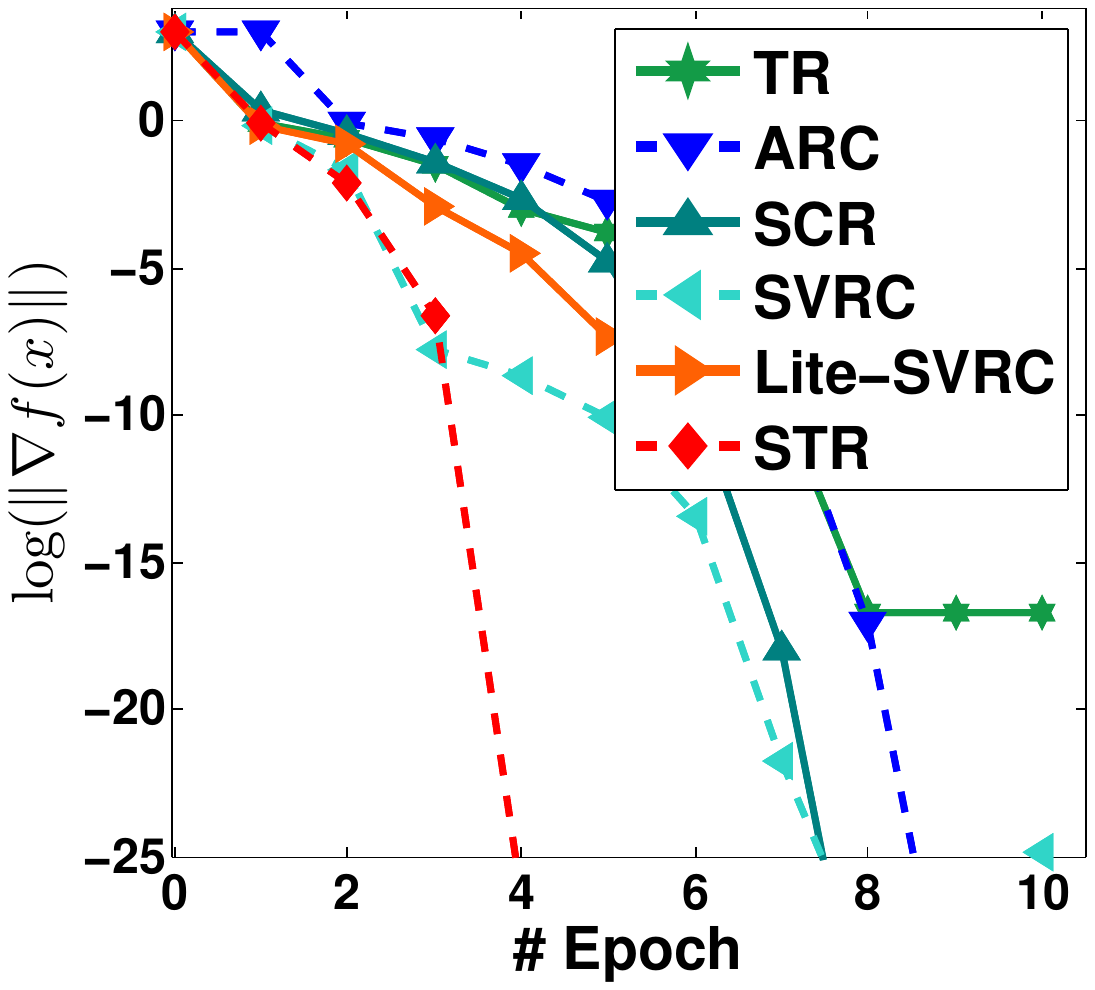}\\
			\multicolumn{4}{c}{{\small{(c) \textsf{codrna}}}}\\
		\end{tabular}
	\end{center}
	\vspace{-1.2em}
	\caption{Comparison   on  the  nonlinear least
		square problem.}
	\label{comparisonsquare}
	\vspace{-1.0em}
\end{figure*}

\subsection{Gradient Estimator: Case (2)}
When the maximum of stochastic gradient and Hessian complexities, is prioritized, we use Hessian to improve the gradient estimation in Algorithm \ref{alg_SPIDER_estimator}.
Intuitively, the Mean Value Theorem gives
\begin{equation*}
\nabla f_i(\xB^t) - \nabla f_i(\xB^{t-1}) = \nabla^2 f_i(\bar{\xB}^t),
\end{equation*}
with $\bar{\xB}^t = \alpha^t\cdot \xB^t + (1-\alpha^t)\cdot\xB^{t-1}$ for some $\alpha^t\in[0, 1]$,
which under Assumption \ref{ass_second_order_smooth} allow us to bound
\begin{equation*}
\|\nabla f_i(\xB^t) - \nabla f_i(\tilde{\xB}) - \nabla^2 f_i(\tilde{\xB})(\xB^t - \tilde{\xB})\| = L_2\|\xB^t - \tilde{\xB}\|^2.
\end{equation*}
Such property can be used to improve Lemma \ref{lemma_SPIDER} of Estimator \ref{alg_SPIDER_estimator}.
Specifically, define the correction term
\begin{equation*}
\cB^k = [\nabla^2 F(\tilde{\xB}) -\nabla^2 f(\tilde{\xB}; \GM)](\xB^k - \xB^{k-1}),
\end{equation*}
where $\tilde{\xB}$ is some reference point updated in a epoch-wise manner.
Estimator \ref{alg_CASPIDER_estimator} adds $\cB^k$ to the estimator in Estimator \ref{alg_SPIDER_estimator}.
Note that in Estimator \ref{alg_CASPIDER_estimator}, the number of first-order and second-order oracle complexities are the same.

We now analyze the necessary first-order (and second-order) oracle complexity to meet requirement (\ref{conditiongradient}).
\begin{lemma} \label{lemma_CASPIDER}
	Assume Algorithm \ref{alg_str_2} takes the trust region size $r = \sqrt{\epsilon/L_2}$ as in Theorem \ref{thm_TR_meta}.
	For any $k\geq 0$, Estimator \ref{alg_CASPIDER_estimator} produces estimator $\gB^{k}$ for the first order differential $\nabla F(\xB^{k})$ such that $\|\gB^{k} - \nabla F(\xB^{k})\|\leq \epsilon/6$ with probability at least $1-\delta/K_0$, if we set $p_1 = n^{0.25}$ and $s_1 = n^{0.75}c\log({K_0}/{\delta})$, with $c = 1152$.
	Consequently the amortized per-iteration stochastic first-order oracle complexity to construct $\gB^{k}$ is $2s_1 = 2n^{0.75}c\log{K_0}/{\delta}$.
\end{lemma}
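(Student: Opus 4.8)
The plan is to control the error $\nabla_k \defi \gB^{k} - \nabla F(\xB^{k})$ by a bounded-martingale argument that parallels the proof of Lemma~\ref{lemma_Hessian_estimator} but now exploits the Hessian correction $\cB^{k}$. Let $k_0$ be the start of the epoch containing $k$, so that $\tilde{\xB} = \xB^{k_0}$ and $\nabla_{k_0} = 0$ by the exact update $\gB^{k_0} = \nabla F(\tilde{\xB})$. Subtracting $\nabla F(\xB^{k})$ from the definition of $\gB^{k}$ and substituting $\gB^{k-1} = \nabla_{k-1} + \nabla F(\xB^{k-1})$, I would first show that for an inner iteration $\nabla_{k} = \nabla_{k-1} + \frac{1}{s_1}\sum_{i\in\GM}\zeta_i$, where $\zeta_i \defi A_i - \frac{1}{n}\sum_{\ell=1}^{n}A_\ell$ and $A_i \defi \nabla f_i(\xB^{k}) - \nabla f_i(\xB^{k-1}) - \nabla^2 f_i(\tilde{\xB})(\xB^{k}-\xB^{k-1})$; the term $-\nabla^2 f_i(\tilde\xB)(\xB^k-\xB^{k-1})$ is exactly what $\cB^{k}$ supplies. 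Since $\GM$ is drawn uniformly at random independently of the past, $\EBB[\zeta_i \mid \FM_{k-1}] = 0$, so unrolling over the epoch gives $\nabla_{k} = \sum_{j=k_0+1}^{k}\frac{1}{s_1}\sum_{i\in\GM_j}\zeta_i^{(j)}$, a sum of at most $p_1 s_1$ martingale differences.

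The heart of the argument — and the step I expect to be the main obstacle — is to bound $\|\zeta_i\|$ by $2 p_1 \epsilon$, which is a factor $p_1$ smaller than the corresponding bound $2 L_1 r$ for the plain SPIDER increment in Estimator~\ref{alg_SPIDER_estimator}. Writing $A_i$ in integral form, $A_i = \int_{0}^{1}\bigl(\nabla^2 f_i(\xB^{k-1}+t(\xB^{k}-\xB^{k-1})) - \nabla^2 f_i(\tilde{\xB})\bigr)(\xB^{k}-\xB^{k-1})\,dt$, Assumption~\ref{ass_second_order_smooth} yields $\|A_i\| \le L_2\,\bigl(\sup_{t\in[0,1]}\|\xB^{k-1}+t(\xB^{k}-\xB^{k-1}) - \tilde{\xB}\|\bigr)\,\|\xB^{k}-\xB^{k-1}\|$. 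Here Theorem~\ref{thm_TR_meta} is indispensable: every non-terminal step satisfies $\|\hB^{\ell}\| = r = \sqrt{\epsilon/L_2}$, so within an epoch of length $p_1$ we have both $\|\xB^{k}-\xB^{k-1}\|\le r$ and $\|\xB^{k-1}+t(\xB^{k}-\xB^{k-1}) - \tilde{\xB}\|\le p_1 r$, whence $\|A_i\|\le L_2 p_1 r^2 = p_1\epsilon$ and therefore $\|\zeta_i\|\le 2p_1\epsilon$. The delicate point is that the second-order remainder must stay anchored at the single base point $\tilde\xB$: expanding $\xB^{k}$ and $\xB^{k-1}$ separately against $\tilde\xB$ would give only $\|A_i\|\lesssim L_2(p_1 r)^2 = p_1^2\epsilon$, too weak to afford a long epoch. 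This is precisely the second-order analogue of the telescoping trick behind SPIDER, and getting the correct power of $p_1$ is the crux.

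With $\|\zeta_i/s_1\|\le 2p_1\epsilon/s_1$ and at most $p_1 s_1$ terms, I would apply an Azuma-type concentration inequality (passing through the Hermitian dilation of the vector $\nabla_k$, exactly as in Lemma~\ref{lemma_Hessian_estimator}) to obtain $Pr\{\|\nabla_{k}\|\ge \epsilon/6\} \le 2\exp\!\bigl(-s_1/(1152\,p_1^{3})\bigr)$. This is at most $\delta/K_0$ once $s_1 \ge 1152\, p_1^{3}\log(K_0/\delta)$; choosing $p_1 = n^{1/4}$ makes $p_1^{3}=n^{3/4}$ and forces $s_1 = 1152\, n^{3/4}\log(K_0/\delta)$, the stated choice, and also gives $n \le p_1 s_1$. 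Finally, for the oracle count, each inner iteration spends $2s_1$ stochastic gradient evaluations on $\nabla f(\xB^{k};\GM)-\nabla f(\xB^{k-1};\GM)$ and $s_1$ stochastic Hessian(-vector) evaluations on $\cB^{k}$, while the epoch head spends $n$ of each; since $n \le p_1 s_1$, amortizing over the $p_1$ iterations leaves a per-iteration cost of order $s_1$, namely $2s_1 = 2n^{3/4}c\log(K_0/\delta)$ with $c=1152$, for the first-order oracle and the same order for the second-order oracle. The only remaining routine ingredient is the union bound over all $k$, handled later by taking $K_0 = 2K$ as for Lemma~\ref{lemma_SPIDER}.
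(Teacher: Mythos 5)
Your proposal is correct and follows essentially the same route as the paper's proof: the same martingale decomposition (your $\zeta_i$ is exactly the paper's $\bB_i^k$), the same key bound $\|\zeta_i\|\le 2L_2 k r^2\le 2p_1\epsilon$ obtained by anchoring the second-order remainder at $\tilde{\xB}$, the same Azuma step yielding the constraint $s_1\ge 1152\,p_1^3\log(K_0/\delta)$, and the same parameter choices $p_1=n^{1/4}$, $s_1=cn^{3/4}\log(K_0/\delta)$. The only cosmetic difference is that you write the Taylor remainder in integral form where the paper invokes the mean value theorem.
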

The proof for Lemma \ref{lemma_CASPIDER} is similar to the one of Lemma \ref{lemma_Hessian_estimator} and is deferred to Appendix \ref{section_proof_lemma_caspider}.

Similar to the previous section, Lemma \ref{lemma_CASPIDER} only guarantee the gradient estimator satisfies the requirement \ref{conditiongradient} in a single iteration.
We extended such result to hold for all $k$ by using the union bound with $K_0 = 2K$, which together with Theorem \ref{thm_TR_meta} gives the following corollary.
\begin{corollary}
	Assume Algorithm \ref{alg_str_2} will use Estimator \ref{alg_CASPIDER_estimator} to construct the first-order differential estimator $\gB^k$ and use Estimator \ref{alg_HB_estimator} to construct the second-order differential estimator $\HB^k$.
	To find an $12\epsilon$-SOSP with probability at least $1-\delta$, the overall stochastic first-order oracle complexity is ${20000n^{0.75}\sqrt{L_2}\Delta}/{\epsilon^{1.5}}$ and the overall stochastic second-order oracle complexity is $c_1{n^{0.75}\sqrt{L_2}\Delta}/{\epsilon^{1.5}}$.
	\label{corollary_str_2}
\end{corollary}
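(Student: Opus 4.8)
The plan is to obtain Corollary~\ref{corollary_str_2} as a bookkeeping consequence of Theorem~\ref{thm_TR_meta} together with the two estimator lemmas, glued by a union bound. First I would recall that, conditioned on the event that the accuracy requirement \eqref{conditiongradient} holds at \emph{every} iteration, Theorem~\ref{thm_TR_meta} guarantees that Algorithm~\ref{alg_str_2} stops and outputs a $12\epsilon$-SOSP within $K = 6\sqrt{L_2}\Delta/\epsilon^{1.5}$ iterations with radius $r = \sqrt{\epsilon/L_2}$, and moreover that $\|\hB^k\| = r$ for all but the last iteration. This last property is exactly what Lemma~\ref{lemma_CASPIDER} and Lemma~\ref{lemma_Hessian_estimator} invoke to bound the variance of the recurrent estimators via the Lipschitz continuity of the differentials, so the two facts must be used consistently.

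Next I would make the ``holds for all $k$'' event precise. Set $K_0 = 2K$. By Lemma~\ref{lemma_CASPIDER} with $p_1 = n^{0.25}$ and $s_1 = 1152\, n^{0.75}\log(K_0/\delta)$, the event $\|\gB^k - \nabla F(\xB^k)\| \le \epsilon/6$ fails at a fixed $k$ with probability at most $\delta/K_0$; by Lemma~\ref{lemma_Hessian_estimator} (Option~I, $p_2 = \sqrt{n}$, $s_2 = 32\sqrt{n}\log(dK_0/\delta)$), the event $\|\HB^k - \nabla^2 F(\xB^k)\| \le \sqrt{\epsilon L_2}/3$ fails at a fixed $k$ with probability at most $\delta/K_0$. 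There are at most $K$ iterations and two estimators per iteration, so a union bound over these $2K = K_0$ events shows \eqref{conditiongradient} holds simultaneously for all $k$ with probability at least $1-\delta$; this is the event conditioned on in the previous step.

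The one spot that needs care is the per-iteration oracle count, because Estimator~\ref{alg_CASPIDER_estimator} secretly uses a full Hessian. Each non-epoch iteration draws $s_1$ samples, costing $s_1$ first-order queries for $\nabla f(\cdot;\GM)$ and $s_1$ second-order queries for $\nabla^2 f(\tilde{\xB};\GM)$; once per epoch of length $p_1$ it additionally computes the full gradient $\nabla F(\tilde{\xB})$ (cost $n$) and the full Hessian $\nabla^2 F(\tilde{\xB})$ (cost $n$). Since $p_1 s_1 = 1152\, n\log(K_0/\delta) \ge n$, amortizing each full computation over the epoch costs at most $n/p_1 = n^{0.75} \le s_1$ per iteration, so the amortized first- and second-order cost of the gradient estimator are each at most $2s_1 = \tilde{\OM}(n^{0.75})$. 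Estimator~\ref{alg_HB_estimator} contributes an additional amortized $2s_2 = \tilde{\OM}(\sqrt{n})$ second-order queries, which is dominated. Multiplying the $\tilde{\OM}(n^{0.75})$ per-iteration cost (for each oracle) by $K = 6\sqrt{L_2}\Delta/\epsilon^{1.5}$ and substituting the explicit constants yields the claimed $20000\, n^{0.75}\sqrt{L_2}\Delta/\epsilon^{1.5}$ first-order complexity and $c_1\, n^{0.75}\sqrt{L_2}\Delta/\epsilon^{1.5}$ second-order complexity, the logarithmic factors being absorbed into the constants.

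I do not expect any real obstacle inside the corollary itself: the genuine work is already packaged in Lemma~\ref{lemma_CASPIDER} (the telescoping/Azuma argument showing the cheap batch size $\tilde{\OM}(n^{0.75})$ suffices for the $\epsilon/6$ gradient accuracy, which crucially exploits $\|\hB^k\| = r$) and in Lemma~\ref{lemma_Hessian_estimator}. The only subtlety to double-check is that the extra $\nabla^2 F(\tilde{\xB})$ in the correction term $\cB^k$ does not inflate the second-order count — and that is handled precisely by the choice $p_1 = n^{0.25}$, which balances $n/p_1$ against $s_1$ so that both are $\OM(n^{0.75})$ up to logarithms. A secondary check is that using the epoch-wise reference point $\tilde{\xB}$ rather than $\xB^{k-1}$ in the correction term is compatible with the accuracy claim, but this is internal to Lemma~\ref{lemma_CASPIDER}.
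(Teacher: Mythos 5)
Your proposal is correct and follows essentially the same route as the paper, which proves this corollary only implicitly: a union bound with $K_0 = 2K$ to make the accuracy requirement hold for all iterations, Theorem~\ref{thm_TR_meta} for the iteration count $K = \OM(\sqrt{L_2}\Delta/\epsilon^{1.5})$, and the amortized per-iteration costs from Lemmas~\ref{lemma_CASPIDER} and~\ref{lemma_Hessian_estimator}. Your explicit accounting of the second-order queries hidden in the correction term $\cB^k$ (the per-iteration $\nabla^2 f(\tilde{\xB};\GM)$ and the per-epoch $\nabla^2 F(\tilde{\xB})$, balanced by $p_1 = n^{0.25}$) is exactly the point the paper flags with its remark that Estimator~\ref{alg_CASPIDER_estimator} has equal first- and second-order oracle counts.
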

Corollary \ref{corollary_str_2} shows that to find an $\epsilon$-SOSP for Problem \ref{eqn_stochastic_problem}, both stochastic first-order and second-order oracle complexities of STR$_2$ are $\OM(n^{3/4}/\epsilon^{1.5})$ which is better than the best existing result $\OM(n^{4/5}/\epsilon^{1.5})$ in \cite{zhou2018stochastic}.

%

\section{Experiments}\label{experiments}
In this section, we compare the proposed STR with several state-of-the-art (stochastic) cubic regularized algorithms and trust region approaches, including trust region (TR) algorithm~\cite{conn2000trust}, adaptive cubic regularization (ARC)~\cite{cartis2011adaptive}, sub-sampled cubic regularization (SCR)~\cite{kohler2017sub}, stochastic variance-reduced cubic (SVRC)~\cite{zhou2018stochastic} and Lite-SVRC~\cite{zhou2018sample}. For STR, we estimate the gradient as the way in case (1). This is because such a method enjoys lower Hessian computational complexity over the way in case (2) and for most problems, computing their Hessian matrices is much more time-consuming than computing their gradients. For the subproblems in these compared methods, we use Lanczos method~\cite{gould1999solving,kohler2017sub} to solve the sub-problem approximately in a Hessian-related Krylov subspace.
We run simulations on five datasets from LibSVM (\textsf{a09}, \textsf{ijcnn}, \textsf{codrna}, \textsf{phishing}, and \textsf{w08}). The details of these datasets are described in Appendix~\ref{append:more_experiment}. For all the considered algorithms,
we tune their hyper-parameters optimally.

\textbf{Two evaluation nonconvex problems.} Following~\cite{kohler2017sub,zhou2018stochastic}, we evaluate all considered algorithms on two learning tasks: the logistic regression with nonconvex regularizer and the nonlinear least square. Given $n$ data points $(\bm{x}_i,y_i)$ where $\bm{x}_i\in \mathbb{R}^d$ is the sample vector and $y_i\in\{-1,1\}$ is the label, \emph{logistic regression with nonconvex regularizer} aims at distinguishing these two kinds of samples by solving the following problem
\begin{equation*}
\min_{\bm{w}} \frac{1}{n}\sum_{i=1}^{n} \log(1+\exp(-y_i \bm{w}^T\bm{x}_i)) + \lambda R(\bm{w};\alpha),
\end{equation*}
where the nonconvex regularizer $R(\bm{w};\alpha)$ is defined as $ R(\bm{w};\alpha)= \sum_{i=1}^{d} {\alpha \bm{w}_i^2}/({1+\alpha \bm{w}_i^2})$.  The \emph{nonlinear least square} problem fits the nonlinear data by minimizing
\begin{eqnarray*}
\begin{split}
\min_{\bm{w}} \frac{1}{2n}\sum_{i=1}^{n} \left[ y_i -\phi(\bm{w}^T\bm{x}_i)\right]^2 + \lambda R(\bm{w},\alpha).
\end{split}
\end{eqnarray*}
For both these two kinds of problems, we set the parameters $\lambda=10^{-3}$ and $\alpha=10$ for all testing datasets.

Figure~\ref{comparisonlogistic} summarizes the testing results on the nonconvex logistic regression problems. For each dataset, we report the function value gap v.s. the overall algorithm running time which can reflect the overall computational complexity of an algorithm,  and also show the function value gap v.s. Hessian sample complexity which reveals the  complexity of Hessian computation. From Fig.~\ref{comparisonlogistic}, one can observe that our proposed STR algorithm  runs faster than the compared algorithms in terms of the algorithm running time, showing the overall superiority of STR. Furthermore, STR also reveals much sharper convergence curves in terms of the Hessian sample complexity which is consistent with our theory. This is because to achieve an $\epsilon$-accuracy local minimum,  the Hessian sample complexity of the proposed STR is $\mathcal{O}(n^{0.5}/\epsilon^{1.5})$ and is superior over the complexity of the compared methods (see the comparison in Sec.~\ref{resultsofcomplexity}).
Indeed, this also explains why our algorithm is also faster in terms of algorithm running time, since for most optimization problems, Hessian matrix is much more computationally expensive than the gradient and thus  more efficient Hessian sample complexity means faster overall convergence speed.

Figure~\ref{comparisonsquare} displays the results of the compared algorithms on the nonlinear least square problems.  STR shows very similar behaviors as those in Figure~\ref{comparisonlogistic}. More specifically, STR achieves fastest convergence rate in terms of both algorithm running time and Hessian sample complexity. On the \textsf{codrna} dataset (the bottom of Figure~\ref{comparisonsquare}) we further plot the function value gap versus running-time curves and Hessian sample complexity. One can obverse that the gradient in STR vanishes significantly faster than other algorithms which means that STR can find the stationary point with high efficiency.    See Figure~\ref{comparisongradient} in Appendix~\ref{appedxexp} for more experimental results on running time comparison.  All these results confirm the superiority  of the proposed STR.

\section*{Conclusion}
We proposed two stochastic trust region variants.
Under two efficiency measurement settings (whether the stochastic first- and second-order oracle complexity are treated equally), the proposed method achieve state-of-the-art oracle complexity.
Experimental results well testify our theoretical implications and the efficiency of the proposed algorithm.
\clearpage
\bibliographystyle{plainnat}
\bibliography{STRV}
\clearpage
\section{Appendix}
\subsection{Proof of Lemma \ref{lemma_SPIDER}} \label{section_proof_lemma_spider}
	Without loss of generality, we analyze the case $0\leq k< q_1$ for ease of notation.
	Define for $k\geq 1$ and $i\in[s_1]$ $$\aB_i^k \defi \nabla f_i(\xB^{k}) - \nabla f_i(\xB^{k-1}) - (\nabla F(\xB^{k}) - \nabla F(\xB^{k-1})).$$
	$\aB_i^k$ is a martingale difference: for all $k$ and $i$
	\begin{equation*}
	\EBB[\aB_i^k|\xB^k] = 0.
	\end{equation*}
	Besides $\aB_i^k$ has bounded norm: Using Assumption \ref{ass_second_order_smooth}
	\begin{align}
	\|\aB_i^k\|\! \leq& \|\nabla f_i(\xB^{k}) \!-\! \nabla f_i(\xB^{k-1})\| \!+\! \|\nabla F(\xB^{k}) \!-\! \nabla F(\xB^{k-1})\| \notag\\
	\leq &L_1\|\xB^{k}-\xB^{k-1}\|\!+\! L_1\|\xB^{k} -\xB^{k-1}\| \notag\\
	\leq& 2L_1\sqrt{\epsilon/ L_2}. 	\label{eqn_SPIDER_proof}
	\end{align}
	From the construction of $\gB^{k}$, we have $$\gB^k -\nabla F(\xB^{k})= \sum_{j=1}^{k}\sum_{i=1}^{s_1} \frac{\aB_i^{j}}{s_1}.$$
	Recall the Azuma's Inequality.
	Using $k\leq p_1$, we have
	\begin{equation*}
	\begin{aligned}
	&Pr\{\|\gB^k -\nabla F(\xB^{k})\| \geq t\} \\
	\leq &\exp\{ -\frac{t^2/8}{\sum_{j=1}^{k}\sum_{i=1}^{s_1}\frac{4\epsilon L_1^2}{L_2s_1^2}}\}\leq \exp\{ -\frac{t^2/8}{{4\epsilon L_1^2 p_1/(s_1L_2)}}\}.
	\end{aligned}
	\end{equation*}
	Take $t=\epsilon/6$ and denote $c = 1152$.
	To ensure that
	\begin{equation*}
	Pr\{\|\gB^k -\nabla F(\xB^{k})\| \geq \epsilon/6\} \leq \delta/K_0,
	\end{equation*}
	we need $\frac{cL_1^2}{L_2}\log\frac{K_0}{\delta} \leq \frac{\epsilon s_1}{p_1}$.
	The best amortized stochastic first-order oracle complexity can be obtain by solving the following two-dimensional programming:
	\begin{equation*}
	\begin{aligned}
	\min_{p_1\geq 1, s_1\geq 1} \quad &(n + s_1(p_1-1))/p_1 \\
	s.t. \quad & \frac{cL_1^2}{L_2}\log\frac{K_0}{\delta} \leq \frac{\epsilon s_1}{p_1},
	\end{aligned}
	\end{equation*}
	which has the solution $s_1 = \min\{n, \sqrt{\frac{n}{\epsilon}\cdot\frac{cL_1^2 \log\frac{K_0}{\delta}}{L_2}}\}$, and $p_1 = \max\{1, \sqrt{n\epsilon\cdot \frac{L_2}{cL_1^2 \log\frac{K_0}{\delta}}}\}$.
	Note that when we take $s_1 = n$, we directly compute $\gB^k = \nabla F(\xB^k)$ without sampling.
	
	The amortized stochastic first-order oracle complexity is obtain by plugging in the choice of $s_1$ and $p_1$.
\subsection{Proof of Lemma \ref{lemma_CASPIDER}} \label{section_proof_lemma_caspider}
	Without loss of generality, we analyze the case $0\leq k< q_1$ for ease of notation.
	Define for $k\geq 1$ and $i\in[s_1]$
	\begin{equation*}
	\begin{aligned}
	\bB_i^k \defi& \nabla f_i(\xB^{k}) - \nabla f_i(\xB^{k-1}) - \nabla^2 f_i(\tilde{\xB})(\xB^k - \xB^{k-1})\\
	&- [\nabla F(\xB^{k}) - \nabla F(\xB^{k-1}) - \nabla^2 F(\tilde{\xB})(\xB^k - \xB^{k-1})].
	\end{aligned}	
	\end{equation*}
	$\bB_i^k$ is a martingale difference: for all $k$ and $i$
	\begin{equation*}
	\EBB[\bB_i^k|\xB^k] = 0.
	\end{equation*}
	Besides $\bB_i^k$ has bounded norm:
	\begin{align*}
	\|\bB_i^k\|\! \leq& \|\nabla f_i(\xB^{k}) \!-\! \nabla f_i(\xB^{k-1}) - \nabla^2 f_i(\tilde{\xB})(\xB^k - \xB^{k-1})\| \\
	&\!+\! \|\nabla F(\xB^{k}) \!-\! \nabla F(\xB^{k-1}) - \nabla^2 F(\tilde{\xB})(\xB^k - \xB^{k-1})\| \notag\\
	= &\|[\nabla^2 f_i(\bar{\xB})-\nabla^2 f_i(\tilde{\xB})](\xB^{k}-\xB^{k-1})\|\\
	&\!+\! \|[\nabla^2 F(\bar{\xB}')- \nabla^2 F(\tilde{\xB})](\xB^{k} -\xB^{k-1})\| \notag\\
	\end{align*}
	where $\bar{\xB}$ and $\bar{\xB}'$ are two points obtained from mean value theorem.
	Using $\|\bar{\xB} - \tilde{\xB}\|\leq k\cdot r$ and $\|\bar{\xB}' - \tilde{\xB}\|\leq k\cdot r$, and Assumption \ref{ass_second_order_smooth}, where $r$ is the trust region size, we bound
	\begin{align*}
	\|\bB_i^k\|\leq& L_2\|\bar{\xB} - \tilde{\xB}\|\|\xB^{k}-\xB^{k-1}\|\!+\! L_2\|\bar{\xB}'-\tilde{\xB}\|\|\xB^{k}-\xB^{k-1}\| \notag\\
	\leq& 2L_2kr^2 \leq 2p_1\epsilon
	\end{align*}
	From the construction of $\gB^{k}$, we have $$\gB^k -\nabla F(\xB^{k})= \sum_{j=1}^{k}\sum_{i=1}^{s_1} \frac{\bB_i^{j}}{s_1}.$$
	We use $k\leq p_1$ and the Azuma's inequality to bound
	\begin{equation*}
	\begin{aligned}
	&Pr\{\|\gB^k -\nabla F(\xB^{k})\| \geq t\} \\
	\leq &\exp\{ -\frac{t^2/8}{\sum_{j=1}^{k}\sum_{i=1}^{s_1}\frac{4p_1^2\epsilon^2}{s_1^2}}\}\leq \exp\{ -\frac{t^2/8}{{4\epsilon^2 p_1^3/s_1}}\}.
	\end{aligned}
	\end{equation*}
	Thus, by taking $t = \epsilon/6$ and $c = 1152$, we need $\frac{s_1}{p_1^3}\geq c\log\frac{K_0}{\delta}$.
	Further we want $s_1p_1 \simeq \OM(n)$ and hence we take $p_1 = n^{0.25}$ and $s_1 = n^{0.75}c\log\frac{K_0}{\delta}$.
	The amortized stochastic first-order oracle complexity is bounded by $2s_1$.
\section{A Stochastic Trust Region Meta Algorithm}

\begin{table*}[h]
	\caption{Descriptions of the five testing datasets.}
	\setlength{\tabcolsep}{6.5pt} 
	\renewcommand{\arraystretch}{0.94}
	\label{Tabledatasets}
	\centering
	{ \footnotesize {
			\begin{tabular}{lcc|lcc}
				\toprule
				&  $\#$sample& $\#$feature & &  $\#$sample& $\#$feature\\  \midrule
				\textsf{a9a} &  32,561& 123 & \textsf{w8a} &49,749&300\\
				\textsf{ijcnn} & 49,990 &22 &\textsf{phishing}&   7,604&68\\
				\textsf{codrna} &  28,305 &8 &     \\
				\bottomrule
				\hline
			\end{tabular}
	}}
	\vspace{-0em}
\end{table*}
\begin{algorithm}[t]
	\floatname{algorithm}{MetaAlgorithm}
	\caption{Inexact Trust Region Method II}
	\label{alg_stochastic_trust_region_non_dual}
	\begin{algorithmic}[1]
		\REQUIRE Initialization $\xB^{0}$, step size $r$, number of iterations $K$, constructions of differential estimators $\gB^{k}$ and $\HB^{k}$
		\FOR{$k = 1$ {\bfseries to} $K$}
		\STATE Compute $\hB^{k}$ by solving \eqref{eqn_qcqp_inexact};
		\STATE $\xB^{k+1} := \xB^{k} + \hB^{k}$;
		\IF{$\|\hB^{k}\|<r$}
		\STATE {\bf Return $\xB_\epsilon := \xB^{k+1}$}
		\ENDIF
		\ENDFOR
		\STATE Randomly select $\bar{k}$ from $[K]$;
		\STATE {\bf Return } $\xB_\epsilon := \xB^{\bar k+1}$;
	\end{algorithmic}
\end{algorithm}
While we use the dual variable $\lambda^{k}$ in MetaAlgorithm \ref{alg_stochastic_trust_region} as a stopping criterion, we present MetaAlgorithm \ref{alg_stochastic_trust_region_non_dual} without using such quantity.
The following theorem shows that when the differential estimators satisfy condition \eqref{conditiongradient}, the similar $\OM(1/k^{2/3})$ convergence rate can be obtain in expectation.
\begin{theorem}
	Consider problem (\ref{eqn_stochastic_problem}) under Assumptions \ref{ass_boundedness_of_F}-\ref{ass_second_order_smooth}.
	If the differential estimators $\gB^{k}$ and $\HB^{k}$ satisfy Eqn. \eqref{conditiongradient} for all $k$.
	By setting $r = \sqrt{\epsilon/L_2}$ and $K = \OM(\sqrt{L_2}\Delta/\epsilon^{1.5})$, MetaAlgorithm \ref{alg_stochastic_trust_region} outputs an $\OM(\epsilon, \sqrt{\epsilon})$-SOSP in expectation.
\end{theorem}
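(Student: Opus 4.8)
The plan is to recycle the per-iteration analysis already carried out in the proof of Theorem~\ref{thm_TR_meta}, and to split on how MetaAlgorithm~\ref{alg_stochastic_trust_region_non_dual} terminates: either it exits through the early-return branch, i.e.\ $\|\hB^{k}\| < r$ at some iteration, or it runs all $K$ iterations with $\|\hB^{k}\| = r$ throughout. In the first case I will show the returned point is a genuine $\OM(\epsilon,\sqrt\epsilon)$-SOSP deterministically; in the second case the averaging over the uniformly sampled index $\bar k$ will deliver the guarantee in expectation. The randomness therefore only enters through $\bar k$, which is consistent with the hypothesis that \eqref{conditiongradient} holds for every $k$.

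First I would treat the early-return case. If $\|\hB^{k}\| < r$ at some iteration $k$, then the complementary condition~\eqref{eqn_optimality_complementary_stochastic}, $\lambda^{k}(\|\hB^{k}\|-r)=0$, forces $\lambda^{k} = 0 \le 2\sqrt{\epsilon/L_2}$. This is precisely the regime analyzed in the second half of the proof of Theorem~\ref{thm_TR_meta}: using only the optimality conditions \eqref{eqn_optimality_first_order_stochastic}--\eqref{eqn_optimality_second_order_stochastic}, the estimator accuracy~\eqref{conditiongradient}, and Assumption~\ref{ass_second_order_smooth}, one gets $\|\nabla F(\xB^{k+1})\| = \OM(\epsilon)$ and $\nabla^2 F(\xB^{k+1}) \succcurlyeq -\OM(\sqrt{L_2\epsilon})\IB$, so the output $\xB_\epsilon = \xB^{k+1}$ is an $\OM(\epsilon,\sqrt\epsilon)$-SOSP, and no expectation is needed.

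Next, suppose the early-return branch is never triggered, so $\|\hB^{k}\| = r = \sqrt{\epsilon/L_2}$ for all $k\in[K]$. Then inequality~\eqref{eqn_proof_3_stochastic}, namely $F(\xB^{k+1}) \le F(\xB^{k}) - \tfrac{\lambda^{k}\epsilon}{4} + \tfrac{\epsilon^{1.5}}{3\sqrt{L_2}}$, is valid at every iteration. Telescoping over the $K$ iterations and invoking Assumption~\ref{ass_boundedness_of_F} yields $\tfrac1K\sum_{k=1}^{K}\lambda^{k} \le \tfrac{4\Delta}{K\epsilon} + \tfrac43\sqrt{\epsilon/L_2}$. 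Choosing $K = \OM(\sqrt{L_2}\Delta/\epsilon^{1.5})$ (concretely $K \ge 3\sqrt{L_2}\Delta/\epsilon^{1.5}$) makes the first term dominated by the second, so that $\EBB[\lambda^{\bar k}] = \tfrac1K\sum_{k}\lambda^{k} = \OM(\sqrt{\epsilon/L_2})$ for $\bar k$ uniform on $[K]$. The concluding step is to note that the gradient and Hessian estimates from the proof of Theorem~\ref{thm_TR_meta} are affine in $\lambda^{k}$: for each fixed $k$, $\|\nabla F(\xB^{k+1})\| \le \tfrac12\lambda^{k}\sqrt{L_2\epsilon} + \epsilon$ and $\nabla^2 F(\xB^{k+1}) \succcurlyeq -\big(\tfrac12\lambda^{k}L_2 + \tfrac43\sqrt{L_2\epsilon}\big)\IB$. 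Taking expectation over $\bar k$ and substituting the averaged bound on $\lambda^{\bar k}$ gives $\EBB[\|\nabla F(\xB^{\bar k+1})\|] = \OM(\epsilon)$ and $\EBB[\lambda_{\min}(\nabla^2 F(\xB^{\bar k+1}))] \ge -\OM(\sqrt{L_2\epsilon})$, i.e.\ an $\OM(\epsilon,\sqrt\epsilon)$-SOSP in expectation.

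I expect the main obstacle to be conceptual rather than computational: recognizing that the ``termination'' argument of Theorem~\ref{thm_TR_meta}, originally phrased for the single iteration at which the dual stopping test fires, is really a per-iteration estimate whose dependence on $\lambda^{k}$ is \emph{linear}, so that a bound on the average of $\lambda^{k}$ (produced by the telescoping sum) translates, by linearity of expectation over the uniform choice of $\bar k$, into an in-expectation SOSP guarantee for the sampled iterate. Some care is also needed to keep the constants aligned with the $12\epsilon$-SOSP convention used elsewhere, and to state precisely that the expectation is over $\bar k$ alone, the accuracy hypothesis~\eqref{conditiongradient} being assumed for all $k$.
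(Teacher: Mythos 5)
Your proposal is correct and follows essentially the same route as the paper's own proof: split on whether the early-return branch $\|\hB^{k}\|<r$ fires (giving $\lambda^{k}=0$ and a deterministic SOSP), otherwise telescope inequality~\eqref{eqn_proof_3_stochastic} to bound $\EBB[\lambda^{\bar k}]$ for the uniformly sampled index and conclude. Your last step --- observing that the gradient and Hessian bounds are affine in $\lambda^{k}$ so the averaged bound transfers by linearity of expectation --- is exactly the detail the paper compresses into ``the rest of the proof is similar to Theorem~\ref{thm_TR_meta},'' so you have in fact made the argument more complete than the printed version.
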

\begin{proof}
	First of all, if our algorithm terminates when we meet $\|\hB^{k}\|<r$, from the complementary property \eqref{eqn_optimality_complementary_stochastic}, we have $\lambda^{k}=0$.
	Then we directly have $\xB^{k+1}$ is an $\OM(\epsilon, \sqrt{\epsilon})$-SOSP following the proof of Theorem \ref{thm_TR_meta}.
	
	In the following, we focus on the case when we always have $\|\hB^{k}\|=r$.
	Use such property and follow the proof of Theorem \ref{thm_TR_meta} to obtain the inequality (same as (\ref{eqn_proof_3_stochastic})):
	\begin{equation*}
	F(\xB^{k+1}) \leq F(\xB^{k}) - \frac{L_2\lambda^{k}}{4}\cdot\frac{\epsilon}{L_2} + \frac{1}{3}\cdot\frac{\epsilon^{1.5}}{\sqrt{L_2}}.
	\end{equation*}
	Summing the above inequality from $k=0$ to $K$, we have
	\begin{equation*}
		\frac{\epsilon}{K+1}\sum_{k=0}^{K}\lambda^{k}
		\leq \frac{4(F(\xB^0) - F(\xB^{K+1}))}{K+1} + \frac{4\epsilon^{1.5}}{3\sqrt{L_2}}.
	\end{equation*}
	By sampling $\bar{k}$ uniformly from $\{0,\ldots,K\}$, we obtain
	\begin{equation*}
	\epsilon\EBB[\lambda^{\bar{k}}] \leq \frac{4\Delta}{K+1} + \frac{4\epsilon^{1.5}}{3\sqrt{L_2}}.
	\end{equation*}
	Taking $K = \frac{\sqrt{L_2}\Delta}{\epsilon^{1.5}}$, we obtain $\EBB[\lambda^{\bar{k}}] \leq \frac{32}{3}\cdot\frac{\epsilon^{0.5}}{\sqrt{L_2}}.$
	
	The rest of the proof is similar to Theorem \ref{thm_TR_meta} and we have the result.
\end{proof}
\section{Additional Experimental Results}~\label{append:more_experiment}
\subsection{Descriptions of Testing Datasets}\label{appedixdata}
We briefly introduce the seven testing datasets in the manuscript. Among them, three datasets are provided in the LibSVM website\footnote[1]{https://www.csie.ntu.edu.tw/~cjlin/libsvmtools/datasets/}, including (\textsf{a09}, \textsf{ijcnn}, \textsf{codrna}, \textsf{phishing} and \textsf{w08}). The  detailed information is summarized in Table~\ref{Tabledatasets}. We can observe that these datasets are different from each other in feature dimension, training samples, etc.
\begin{figure*}[t]
	\begin{center}
		\setlength{\tabcolsep}{0.8pt} 
		\renewcommand{\arraystretch}{0.1}
		\begin{tabular}{cccc}
			\multicolumn{2}{c}{{\small{\textsf{a9a}}}}& \multicolumn{2}{c}{{\small{
						\textsf{ijcnn}}}}\\
			\includegraphics[width=0.245\linewidth]{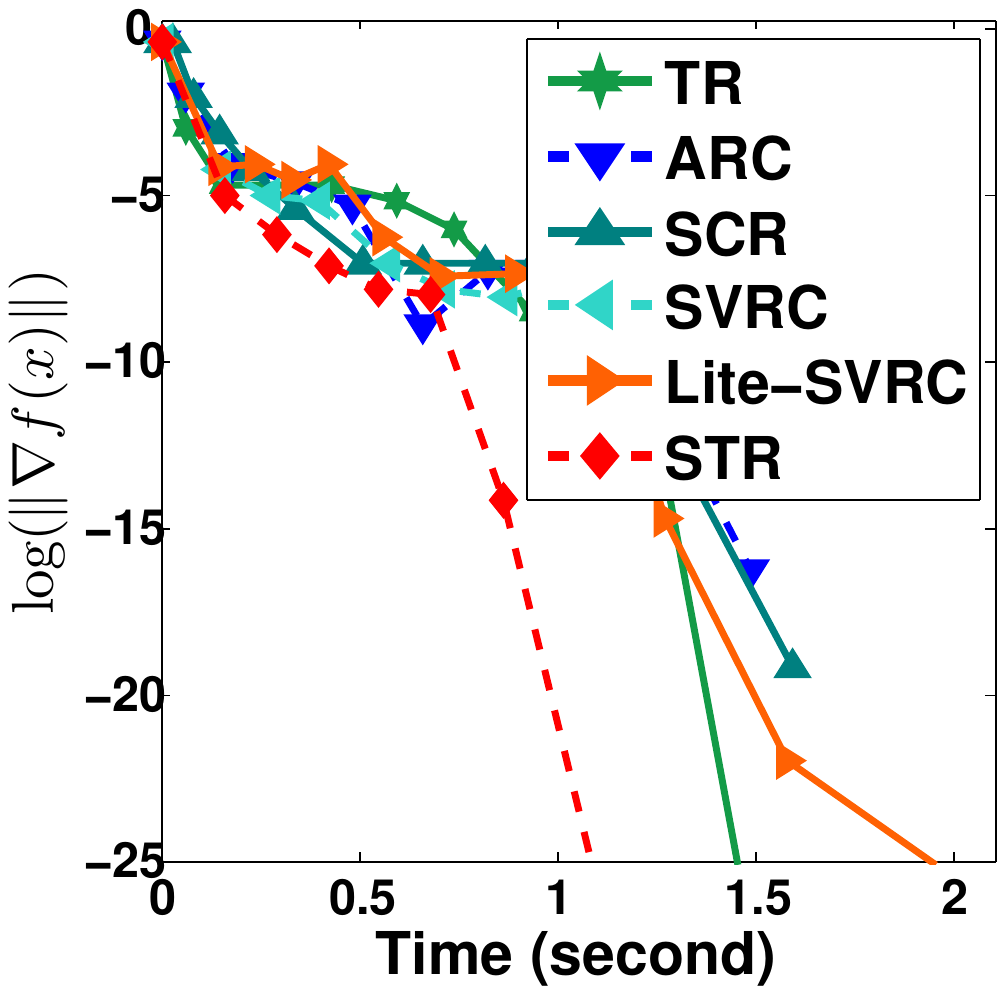}&
			\includegraphics[width=0.245\linewidth]{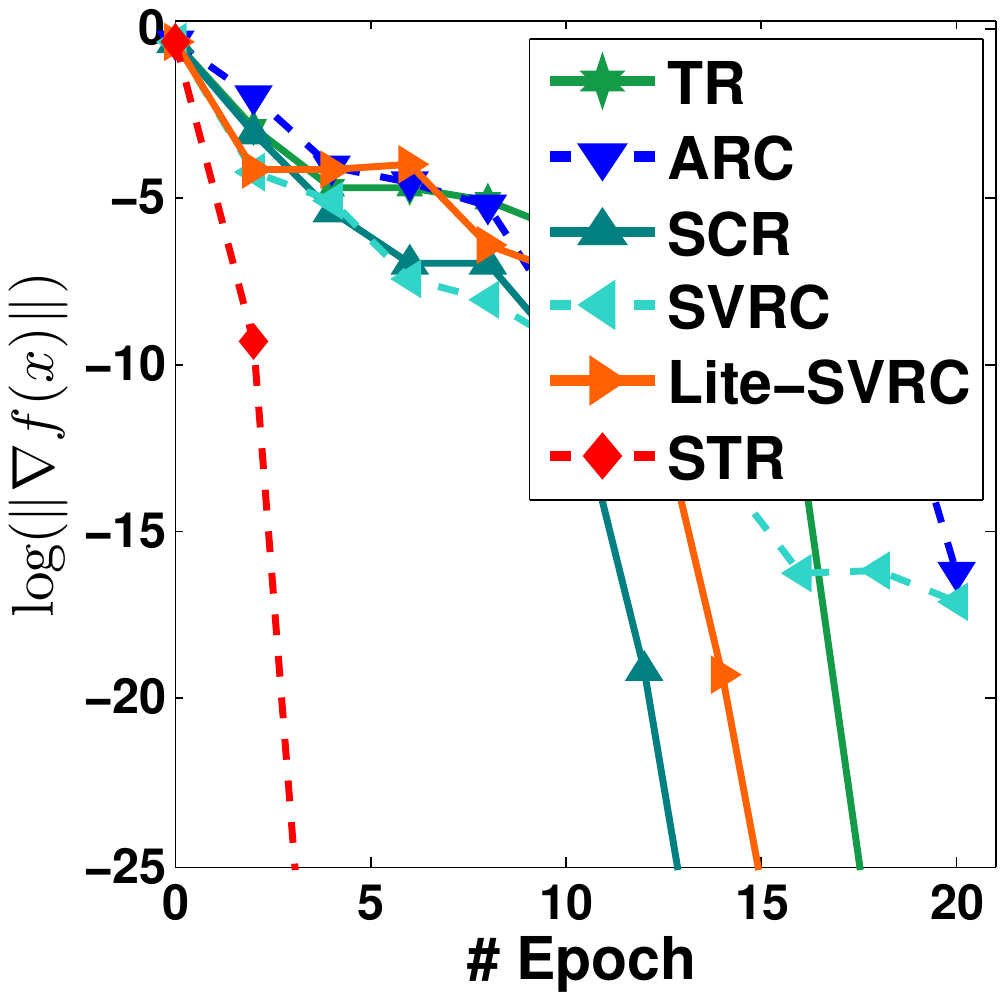}&
			\includegraphics[width=0.245\linewidth]{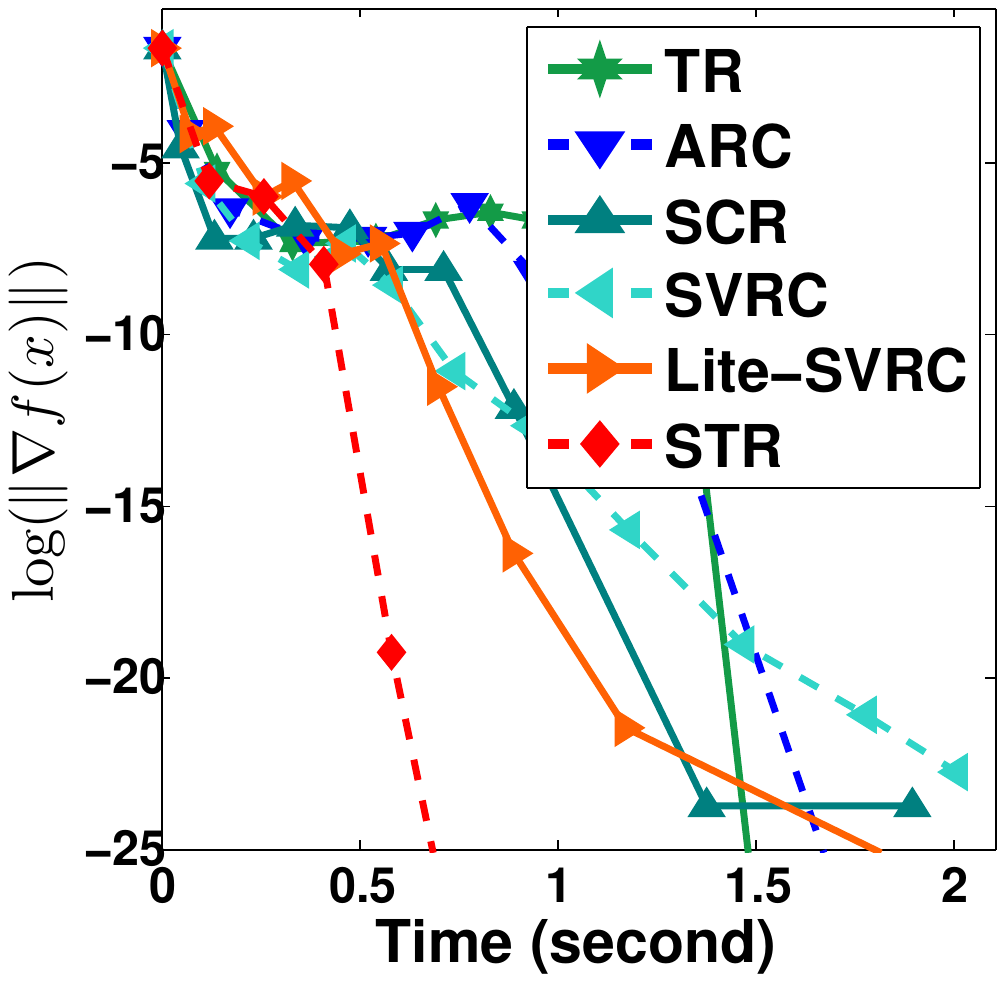}&
			\includegraphics[width=0.245\linewidth]{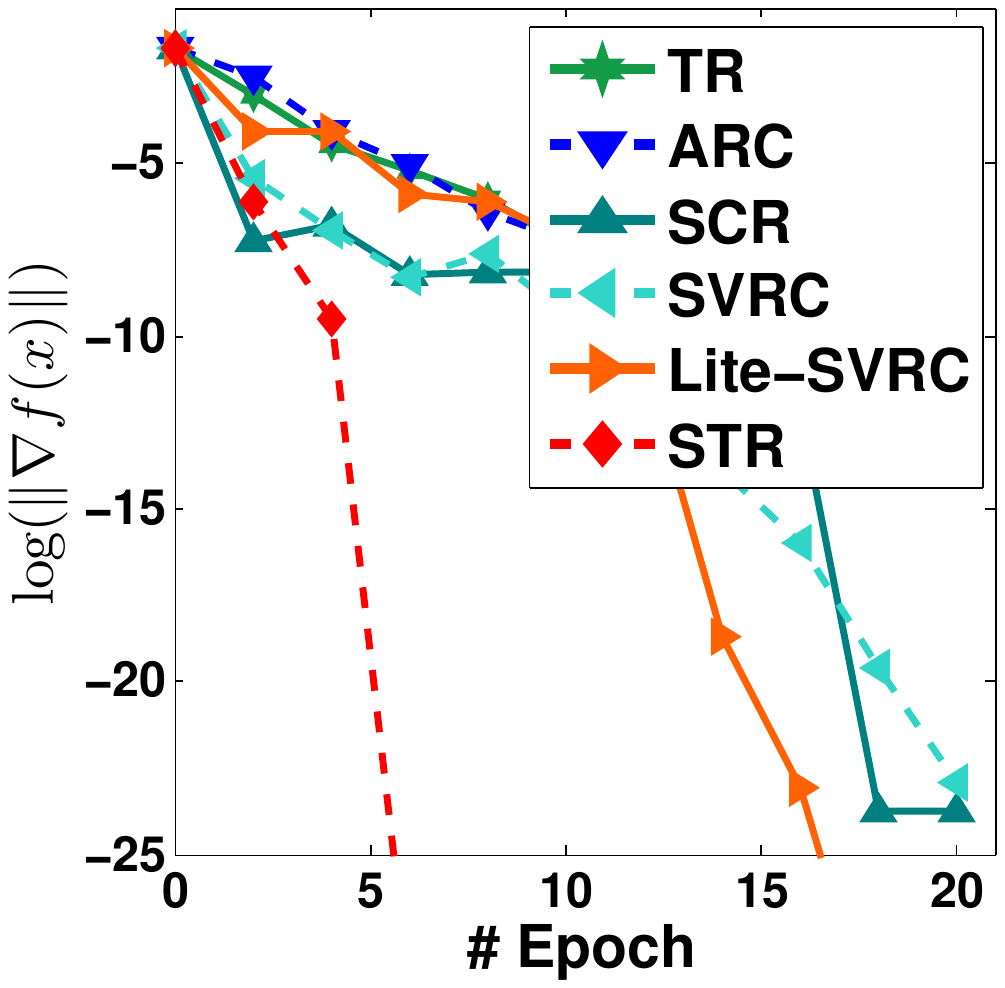}\vspace{0.5em}\\
			\multicolumn{4}{c}{{\small{(a)  nonconvex logistic regression problem}}} \\
		\end{tabular}
		\begin{tabular}{cccc}
			\multicolumn{2}{c}{{\small{\textsf{a9a}}}}& \multicolumn{2}{c}{{\small{\textsf{ijcnn}}}}\\
			\includegraphics[width=0.245\linewidth]{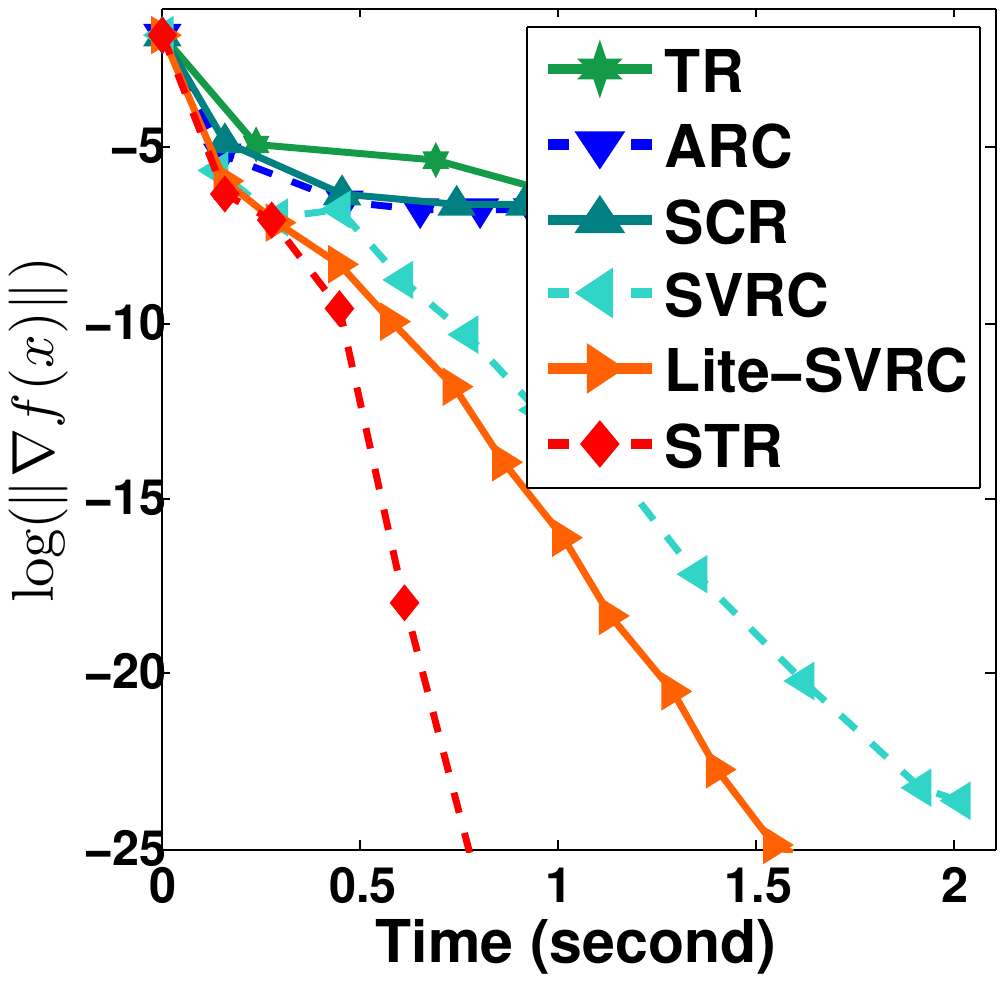}&
			\includegraphics[width=0.245\linewidth]{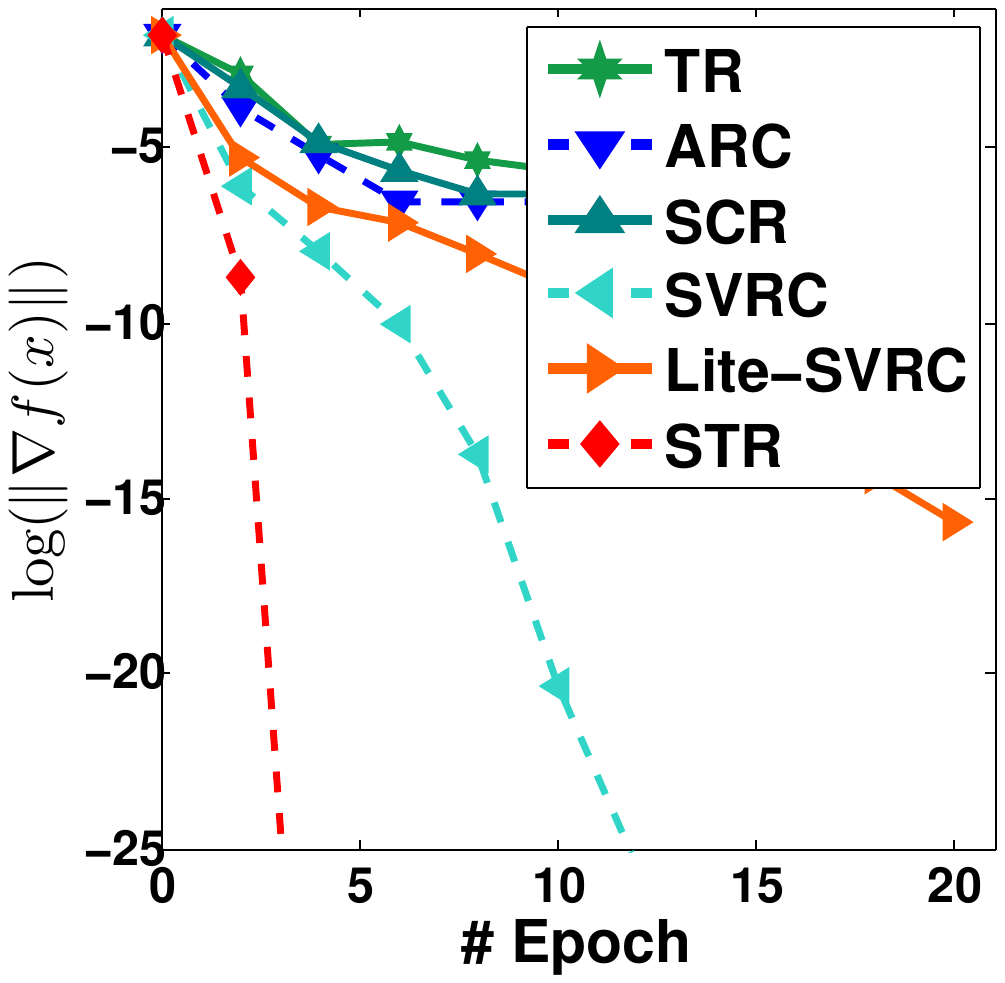}&
			\includegraphics[width=0.245\linewidth]{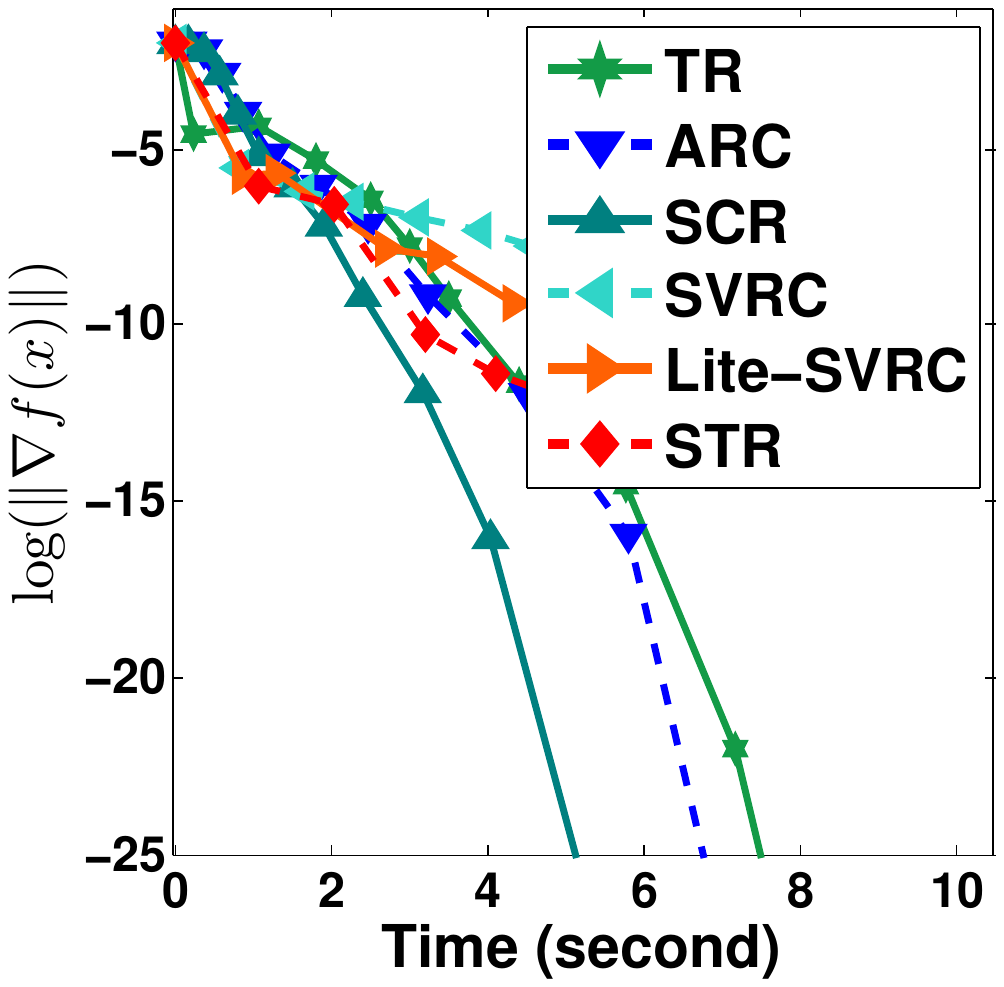}&
			\includegraphics[width=0.245\linewidth]{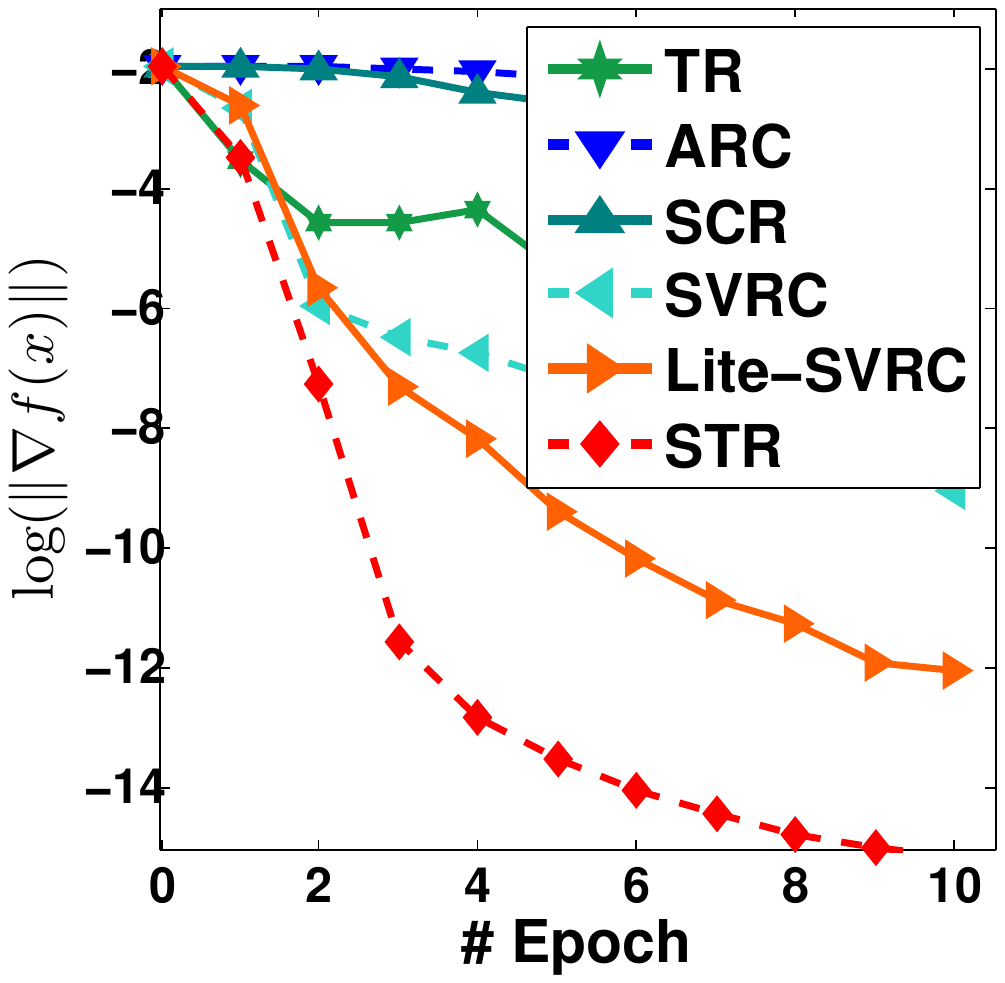}\vspace{0.5em}\\
			\multicolumn{4}{c}{{\small{(b) nonlinear least square problem}}} \\
		\end{tabular}
	\end{center}
	\caption{Comparison of gradient norm on both the nonconvex logistic regression and nonlinear least square problems. }
	\label{comparisongradient}
	\vspace{-1.0em}
\end{figure*}

\subsection{More Experiments}\label{appedxexp}
Here we give more experimental results on the gradient norm v.s. the algorithm running time and the Hessian sample complexity. Due to the space limit, in the manuscript we only provide the gradient-norm related results on the \textsf{codrna} dataset. Here we provide the results of \textsf{a9a} and \textsf{ijcnn} datasets  in Figure~\ref{comparisongradient}. One can observe that on both the logistic regression with nonconvex regularizer and the nonlinear least square problems, the proposed algorithm always shows sharper convergence behavior in terms of both the running time and the Hessian sample complexity.  These observations are consistent with the results in Figure~\ref{comparisonsquare} in the manuscript. All these results demonstrate the high efficiency of our proposed algorithm and also confirm our theoretical implication.

\end{document}